\numberwithin{equation}{section}       % Number formulas within sections
\theoremstyle{plain}
\newtheorem{theorem}{Theorem}[section]
\newtheorem{prop}{Proposition}[section]
\newtheorem{coro}[prop]{Corollary}
\newtheorem{lemma}[prop]{Lemma}
\theoremstyle{definition}
\newtheorem{definition}[prop]{Definition}
\theoremstyle{remark}
\newtheorem{remark}[prop]{Remark}
\newtheoremstyle{citing}% name
  {3pt}%      Space above, empty = `usual value'
  {3pt}%      Space below
  {\itshape}% Body font
  {}%         Indent amount (empty = no indent, \parindent = para indent)
  {\bfseries}% Thm head font
  {.}%        Punctuation after thm head
  {.5em}%     Space after thm head: " " = normal interword space;
\theoremstyle{citing}
\DeclareMathAlphabet{\mathpzc}{OT1}{pzc}{m}{it} % Zapf Chancery math alphabet
\newcommand{\C}{\mathbb{C}}
\newcommand{\Z}{\mathbb{Z}}
\newcommand{\sU}{\mathscr{U}}
\newcommand{\teta}{\widetilde{\teta}}
\newcommand{\eps}{\varepsilon}
\newcommand{\dist}{d}
\DeclareMathOperator{\diam}{diam}
\newcommand{\area}{\textrm{area}}
\begin{document}

\title[]{The Lyapunov exponent of holomorphic maps}

\author{Genadi Levin}

\address{Institute of Mathematics, The Hebrew University of Jerusalem, Givat Ram,
Jerusalem, 91904, Israel}

\email{levin@math.huji.ac.il}

\author{Feliks Przytycki}

\address{Institute of Mathematics, Polish Academy of Science, Sniadeckich St., 8, 00-956 Warsaw,
Poland}

\email{feliksp@impan.pl}

\thanks{The second author was partially supported in 2001-2014 by Polish MNiSW Grant N N201 607640. The third author is partially supported by Grant C-146-000-032-001 from National University of Singapore.}

\author{Weixiao Shen}

\address{Department of Mathematics, National University of Sinapore,
10 Lower Kent Ridge Road, Singapore 119076}
\address{
Shanghai Center for Mathematical Sciences, Fudan University, 220 Handan Road, Shanghai, China 200433
}

\email{wxshen@fudan.edu.cn}

\date{\today}

\begin{abstract}
We prove that for any polynomial map with a single critical point its
lower Lyapunov exponent at the critical value is negative if and only if
the map has an attracting cycle. Similar statement holds for the
exponential maps and some other complex dynamical systems.
We prove further that for the unicritical polynomials with positive area Julia sets
almost every point of the Julia set
has zero Lyapunov exponent. Part of this statement generalizes as follows:
every point with positive upper Lyapunov exponent in the Julia set of an arbitrary polynomial
is not a Lebegue density point.
\end{abstract}

\maketitle

\section{Introduction}

\subsection{Main results.}
In recent years, dynamical systems with different
non-uniform hyperbolicity conditions
have been studied. Speaking about one-dimensional (real or complex) dynamics,
such restrictions are often put on the
critical orbits of the map, such as Collet-Eckmann~\cite{CE}, semi-hyperbolic~\cite{CJY} and other conditions, see e.g. \cite{NS, Przytycki, GS, BvS, RS}. See also subsections~\ref{comp}-\ref{hist} of the Introduction.
%\marginpar{Changes}

Simplest and most studied are unicritical polynomial maps $f(z)=z^d+c$
and exponential maps $E(z)=a\exp(z)$. In the first two results, we
prove that for each such polynomial or exponential
map without sinks, but otherwise arbitrary,
there is always
a certain expansion along the critical orbit.

\begin{theorem}\label{uni}
Let $f(z)=z^d+c$, where $d\ge 2$ and $c\in \C$. Assume that $c$ does not belong
to the basin of an attracting cycle.
Then
$$\chi_-(f, c)=\liminf_{n\to\infty} \frac{1}{n}\log |Df^n(c)|\ge 0.$$
\end{theorem}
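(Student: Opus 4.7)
My plan is a proof by contradiction. Suppose $\chi_-(f,c) < 0$, so there exist $\alpha > 0$ and integers $n_k \to \infty$ with $|Df^{n_k}(c)| \le e^{-\alpha n_k}$. Write $c_j := f^j(c)$. Note two consequences of the standing hypothesis: (i) no iterate satisfies $c_j = 0$, since otherwise $0$ would be super-attracting periodic; in particular $|Df^n(c)| > 0$ for all $n$. (ii) The forward critical orbit $\{c_j\}$ is bounded in $\C$, since otherwise $c$ would lie in the basin of the super-attracting fixed point at infinity (itself an attracting cycle for $f$). Passing to a subsequence, we may assume $c_{n_k} \to c^* \in K(f)$.

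The core construction is to produce, for infinitely many $k$, a \emph{univalent} inverse branch $\psi_k$ of $f^{n_k}$ defined on a Euclidean disk $B(c_{n_k}, r_k)$, with $\psi_k(c_{n_k}) = c$, for which $r_k\, e^{\alpha n_k} \to \infty$. Granted this, Koebe's distortion theorem yields
\[
\psi_k\bigl(B(c_{n_k}, r_k/2)\bigr) \supset B\bigl(c,\, C\, r_k /|Df^{n_k}(c)|\bigr) \supset B(c,\, C\, r_k\, e^{\alpha n_k})
\]
for an absolute constant $C > 0$, so the pullback domain $W_k := \psi_k(B(c_{n_k}, r_k))$ contains all of $K(f)$, and in particular $c_{n_k}$, once $k$ is large. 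Then $f^{n_k}$ maps $W_k$ univalently onto $B(c_{n_k}, r_k) \Subset W_k$; since $W_k \subsetneq \C$ is simply connected, the hyperbolic Schwarz lemma makes $f^{n_k}\colon W_k \to W_k$ a uniform hyperbolic contraction, and the Banach fixed point theorem supplies a unique attracting fixed point $p \in W_k$. The iterates $(f^{n_k})^j(c) \to p$ place $c$ in the basin of the attracting periodic orbit of $f$ through $p$, contradicting the hypothesis.

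The essential obstacle is therefore establishing the univalent pullback at a scale $r_k$ with $r_k\, e^{\alpha n_k} \to \infty$. The pullback is univalent iff the critical point $0$ avoids every intermediate component $D_j^{(k)} \ni c_j$ for $0 \le j < n_k$; this is delicate because the factorization $|Df^n(c)| = d^n \prod_{j<n} |c_j|^{d-1}$ (specific to unicritical polynomials) shows that $\chi_-(f,c) < 0$ \emph{forces} $\{c_j\}$ to visit every neighborhood of $0$ with positive frequency, directly opposing the univalence requirement. The resolution I would pursue is a \emph{telescope}: fix an outer scale $r_0$, and let $r_k \in (0, r_0]$ be the maximal radius for which the pullback is univalent. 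If $\liminf_k r_k\, e^{\alpha n_k} = \infty$ the core construction applies directly. Otherwise maximality forces $0 \in \partial D_{j_k}^{(k)}$ for some intermediate index $j_k$; applying $f$ gives $c \in \overline{D_{j_k + 1}^{(k)}}$, the sub-chain from $c_{j_k+1}$ to $c_{n_k}$ is univalent by construction, and the bookkeeping $|Df^{n_k - j_k - 1}(c_{j_k+1})| = |Df^{n_k}(c)|/|Df^{j_k+1}(c)|$, combined with iterated telescoping (each step strictly shortening the iterate length, so the procedure must terminate), eventually furnishes a definite-scale univalent pullback with enough contraction to produce the attracting cycle, the sought contradiction.
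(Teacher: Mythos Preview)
Your ``core construction'' is correct, and it is essentially the same Schwarz--Lemma argument the paper uses (inside Claim~1 of Lemma~\ref{lem:return}) to rule out $U_i\supset \overline{B(c,2\delta)}$. The substantive difference is that the paper runs the telescope as a direct lower bound $|Df^n(c)|\ge C\lambda^{-n}$ rather than by contradiction; but either framing needs the same engine, namely quantitative control of the telescope, and that is where your proposal has a genuine gap.

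Concretely, once univalence breaks and $0\in\partial D_{j_k}^{(k)}$, you assert that ``iterated telescoping (each step strictly shortening the iterate length) eventually furnishes a definite-scale univalent pullback with enough contraction.'' Two ingredients are missing. First, you need to know that each loss of univalence costs only a \emph{bounded} factor in radius: in the paper's notation, $\tau_{i+1}/\tau_i\le K$ for a universal $K$. This is not automatic; it comes from a modulus estimate---the annulus $U_{i+1}\setminus f(U_i)$ has bounded modulus because $U_{i+1}$ cannot contain $\overline{B(c,2\delta)}$ (your own Schwarz argument) while $f(U_i)$ has diameter $\ge\delta$ (it reaches from $z_{i+1}$ to $c$ on its boundary). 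Second, you need to bound the \emph{number} $N$ of such drops: the paper observes that consecutive drops force close returns of the critical orbit to $B(c,2\delta)$, so $n\ge C(\delta)\,N$ with $C(\delta)\to\infty$. Together these give $\tau_0\ge \tau_n K^{-N}$ with $K^{N}\le \lambda^n$ for small $\delta$, which in your language says $r_k\ge r_0\,\lambda^{-n_k}$; choosing $\lambda<e^{\alpha}$ then gives $r_k e^{\alpha n_k}\to\infty$ and your core construction fires.

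Your proposed recursion via the sub-chain from $c_{j_k+1}$ to $c_{n_k}$ does not substitute for this: the bookkeeping identity moves the base point away from $c$, you have no a priori exponential smallness of $|Df^{n_k-j_k-1}(c_{j_k+1})|$ (you would need a \emph{lower} bound on $|Df^{j_k+1}(c)|$, which is exactly what is in question), and nothing prevents the radius from dropping by an unbounded factor at each step, so termination of the recursion does not by itself yield a ``definite-scale'' pullback. In short, the telescope is the whole proof; the two estimates above are its content, and they need to be supplied.
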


Theorem~\ref{uni} has been known before for real $c$
(more generally, for S-unimodal maps of an interval)~\cite{nowsan}.

Note that ~\cite{Bruin} contains examples of real quadratic polynomials $f(z)=z^2+c$ without attracting or neutral cycles
such that $\liminf_{n\to\infty} |Df^n(c)|=0$.

\begin{theorem}\label{theo:exp}
Let $E(z)=a\exp(z)$, where $a\in \C
\setminus \{0\}$. Assume that $0$ does not belong to the basin of an attracting
cycle. Then
$$\chi_-(E, 0)=\liminf_{n\to\infty} \frac{1}{n}\log |DE^n(0)|\ge 0.$$
\end{theorem}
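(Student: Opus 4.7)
The plan is to argue by contradiction, in parallel with the strategy of Theorem~\ref{uni}. Assume $\chi_-(E,0) = -\lambda < 0$. Because $E'(z) = E(z)$, the chain rule gives
$$|DE^n(0)| = \prod_{j=1}^{n} |\zeta_j|, \qquad \zeta_j := E^j(0),$$
so one extracts an infinite sequence of first-passage times $n_k \to \infty$ at which $\log|DE^{n_k}(0)| \le -\lambda n_k/2$ while $\log|DE^{n_k-1}(0)| > -\lambda(n_k-1)/2$. The increment identity $\log|\zeta_{n_k}| = \log|DE^{n_k}(0)| - \log|DE^{n_k-1}(0)|$ then gives $|\zeta_{n_k}| \le e^{-\lambda/2}$, so at these times the orbit of $0$ returns to a definite neighbourhood of the omitted value while the derivative satisfies $|DE^{n_k}(0)| \le e^{-\lambda n_k/2}$.

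The main step is to construct, for each such $n = n_k$, a univalent branch $g_n$ of $E^{-n}$ defined on a round disk $B_n = B(\zeta_n, r_0)$ of definite size $r_0 > 0$ (uniform in $n$) with $g_n(\zeta_n)=0$. Granted such a $g_n$, Koebe distortion on a slightly smaller disk $B_n' = B(\zeta_n, r)$ shows that $V_n := g_n(B_n')$ is a simply connected neighbourhood of $0$ whose inradius is at least $c\, r\, |DE^n(0)|^{-1} \ge c\, r\, e^{\lambda n/2}$ for a universal Koebe constant $c > 0$. Since $|\zeta_n| \le e^{-\lambda/2}$ is bounded independently of $n$, for $n$ sufficiently large we have $B_n' \subset B(0, |\zeta_n| + r) \subset V_n$. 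Because $E^n : V_n \to B_n'$ is conformal, this gives $E^n(V_n) = B_n' \Subset V_n$; applying the Schwarz lemma in the hyperbolic metric of the hyperbolic Riemann surface $V_n$ produces an attracting fixed point of $E^n$ inside $V_n$ whose immediate basin contains $V_n$. Since $0\in V_n$, the orbit of $0$ is attracted to a cycle of $E$, contradicting the hypothesis.

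The main obstacle is the construction of the univalent pullback $g_n$ on a disk of definite size. The covering property $E:\C\to\C\setminus\{0\}$ gives univalent branches of $E^{-1}$ over any simply connected domain in $\C\setminus\{0\}$, so the difficulty is not existence but geometric control: one must carry out a shrinking-neighbourhoods construction along the forward orbit $\zeta_n, \zeta_{n+1}, \dots$, keeping the iterated pullbacks simply connected, disjoint from a fixed disk around $0$, and of radius bounded below uniformly in $n$. This is precisely the telescope-type argument that underlies Theorem~\ref{uni} in the polynomial setting, now adapted to handle the essential singularity of $E$ at infinity and the recurrences of the singular orbit near $0$ forced by the negative Lyapunov exponent assumption.
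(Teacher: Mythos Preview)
Your proposal is an outline rather than a proof: you explicitly identify the construction of the univalent branch $g_n$ on a disk of \emph{definite} radius $r_0$ (uniform in $n$) as ``the main obstacle'' and then defer it to an unspecified ``telescope-type argument''. That deferred step is the entire content of the theorem; everything before it is bookkeeping.

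The paper's route is different and more direct. It does not argue by contradiction and does not try to manufacture an attracting cycle. Instead, it shows that $E$ lies in a general class $\sU_{V,V'}$ of maps with a single asymptotic/critical value $c$ (here $V=V'=\C$, $c=0$), and proves for any $f\in\sU$ the uniform lower bound $|Df^s(c)|\ge C(\lambda)\,\lambda^{-s}$ for every $\lambda>1$ and every $s\ge 1$ (Lemma~\ref{lem:pass}), from which $\chi_-(c)\ge 0$ is immediate. The mechanism is the shrinking-neighbourhoods construction of Lemma~\ref{lem:return}: one pulls back $B(z_n,\tau_n)$ with $\tau_n=|z_n-c|$ and tracks the moments $i\in\mathcal{I}$ at which univalence forces $\tau_i<\tau_{i+1}$; each such drop is bounded ($\tau_{i+1}\le K\tau_i$) and the number $N=\#\mathcal{I}$ is at most $n/C(\delta)$ because consecutive drops correspond to returns of the orbit of $c$ to $B(c,2\delta)$. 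This yields $|Df^n(z)|\ge \tau_n\,(12K^N)^{-1}/\max\{\delta,|z-c|\}$, and choosing $\delta$ so that $12K\le\lambda^{C(\delta)}$ turns $K^N$ into $\lambda^n$.

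Two concrete issues with your plan. First, the telescope argument does \emph{not} produce a uniform $r_0$: it gives $\tau_0\ge \tau_n K^{-N}\gtrsim \mu^{-n}$ for any prescribed $\mu>1$, which still shrinks with $n$. A shrinking radius would in fact suffice for your Schwarz-lemma contradiction (take $\mu<e^{\lambda/4}$), but once you have carried out that estimate you already possess $|DE^n(0)|\ge C\mu^{-n}$ directly from Koebe, and the contradiction wrapper becomes superfluous. Second, the extraction of ``first-passage times'' with $\log|DE^{n_k-1}(0)|>-\lambda(n_k-1)/2$ and $\log|DE^{n_k}(0)|\le -\lambda n_k/2$ need not produce infinitely many $n_k$: nothing prevents $\log|DE^n(0)|\le -\lambda n/2$ for all large $n$. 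You would need a different selection (e.g.\ the times at which $n\mapsto \log|DE^n(0)|+\lambda n/2$ attains a new minimum), and then redo the increment computation.
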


These two theorems are special cases of the following theorem.

Let $\sU=\sU_{V,V'}$ be the set of all holomorphic maps $f:V\to V'$ between open sets $V\subset V'\subset \C$, for which there exists a unique point $c=c(f)\in V'$ and a positive number $\rho=\rho(f)$ with the following properties:

\begin{enumerate}
\item[(U1)] $f: V\setminus f^{-1}(c)\to V'\setminus \{c\}$ is an unbranched covering map;
\item[(U2)] for each $n=0,1,\ldots$, $f^n(c)$ is well defined and $B(f^n(c),\rho(f))\subset V'$.
%\item[(U3)] for $z\in V$ with $|f(z)-c|\ge M$, we have $|Df(z)|\ge 1$.
\end{enumerate}

\begin{theorem}\label{thm:main}
For any $f\in \sU_{V,V'}$, if $c(f)$ does not belong to the basin of an attracting cycle, then $\chi_{-}(c(f))\ge 0$.
\end{theorem}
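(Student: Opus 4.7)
The plan is to argue by contradiction: assume $\chi_-(c(f))=-\alpha<0$ for some $\alpha>0$ and produce an attracting cycle for $f$ whose basin contains $c:=c(f)$. First I would reduce to the aperiodic case: if $c$ is periodic of period $k$, then $|Df^k(c)|$ is the multiplier of its cycle, which by the hypothesis (no attracting cycle containing $c$) must satisfy $|Df^k(c)|\ge 1$, whence $\chi_-(c)\ge 0$. In the non-periodic case, the forward orbit of $c$ never meets a critical point of $f$ (otherwise some $f^m(c)$ would equal $c$), which is exactly what is needed to make the pullbacks below unramified at $c$.

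From $\chi_-(c)<0$, extract a subsequence $n_j\to\infty$ with $|Df^{n_j}(c)|<e^{-\alpha n_j}$, and for each $n$ let $W_n\subset V$ be the connected component of $f^{-n}(B(c_n,\rho))$ containing $c$; property (U1) makes $f^n\colon W_n\to B(c_n,\rho)$ a proper branched holomorphic covering, unramified at $c$. The technical heart of the argument is to establish that for infinitely many $n_j$ this pullback is univalent (or at least has uniformly bounded degree with bounded distortion on a definite subdisk around $c$). Granted this, the Koebe distortion theorem applied to the univalent inverse branch $g_{n_j}$ sending $c_{n_j}\mapsto c$ yields
\[
B(c,r_j)\subset W_{n_j},\qquad r_j\gtrsim \frac{\rho}{|Df^{n_j}(c)|}\gtrsim \rho\, e^{\alpha n_j},
\]
so the inradius of $W_{n_j}$ at $c$ blows up along the subsequence.

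Once $r_j$ is large enough that $W_{n_j}\supset\overline{B(c_{n_j},\rho)}$, the map $f^{n_j}\colon W_{n_j}\to B(c_{n_j},\rho)$ sends $W_{n_j}$ into a relatively compact subset of itself. Since $W_{n_j}$ is a hyperbolic subdomain of $\C$, the Schwarz--Pick lemma makes $f^{n_j}$ a strict Poincar\'e contraction on $W_{n_j}$ and forces a unique attracting fixed point $p\in W_{n_j}$ whose basin engulfs all of $W_{n_j}$. As $c\in W_{n_j}$, the orbit $f^{kn_j}(c)\to p$, so $c$ lies in the basin of the attracting cycle through $p$, contradicting the hypothesis.

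The principal obstacle is the univalence/bounded-degree claim: the forward orbit $c_0,\ldots,c_{n-1}$ may re-enter $W_n$, and every such revisit produces a new critical point of $f^n$ inside $W_n$ (because every critical value of $f^n$ eventually maps to $c$), causing the pullback degree to grow with $n$. Overcoming this requires a careful choice of subsequence $n_j$ making postcritical returns sparse in the pullback, crucially using the uniform radius $\rho$ from (U2). A secondary difficulty---most visible in the exponential example, where $c_n$ may escape to infinity and the naive Euclidean engulfment $B(c,r_j)\supset B(c_{n_j},\rho)$ fails---is replacing the Euclidean Koebe estimate by a hyperbolic comparison in a suitable Riemann surface adapted to the dynamics near $c_{n_j}$, again relying on the uniform neighborhoods guaranteed by (U2).
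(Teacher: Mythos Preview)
Your outline correctly isolates the Schwarz--Pick contraction as the mechanism linking small derivative to an attracting cycle; this is exactly what the paper invokes (inside Claim~1 of Lemma~\ref{lem:return} and again in Lemma~\ref{lem:awayfrom0}) to rule out $U_i\supset\overline{B(c,2\delta)}$. But what you label the ``principal obstacle'' is not a leftover detail: it is essentially the whole theorem, and your proposed fix---choosing a subsequence along which postcritical returns are sparse---does not work as stated. Along a subsequence where $|Df^{n_j}(c)|$ is exponentially small the orbit of $c$ must return close to itself many times, and each such return places a critical value of $f^{n_j}$ inside $B(c_{n_j},\rho)$; in typical recurrent situations the degree of $f^{n_j}\colon W_{n_j}\to B(c_{n_j},\rho)$ grows linearly in $n_j$. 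There is no selection principle that produces bounded degree without an additional idea, and with unbounded degree the Koebe step gives no useful lower bound on the inradius of $W_{n_j}$.

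The paper avoids the degree problem altogether by never pulling back the fixed ball $B(c_n,\rho)$. Instead, for each intermediate index $i$ it tracks the \emph{maximal} radius $\tau_i\le\tau_{i+1}$ for which $f^{n-i}$ admits a univalent inverse branch on $B(z_n,\tau_i)$ over $z_i$, and proves two things: (i) $\tau_{i+1}/\tau_i\le K$ for a universal constant $K$ at each index where univalence is obstructed (a modulus estimate, using precisely your Schwarz argument to show the obstructed domain cannot swallow $\overline{B(c,2\delta)}$); and (ii) the number $N$ of obstructed indices satisfies $n\ge C(\delta)N$, where $C(\delta)$ is the minimal first-return time to $B(c,2\delta)$. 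Since $c$ is aperiodic, $C(\delta)\to\infty$ as $\delta\to 0$; choosing $\delta$ so that $12K\le\lambda^{C(\delta)}$ yields $\tau_0\ge\tau_n\lambda^{-n}/12$ and hence, via Koebe $1/4$, $|Df^n(c)|\ge C(\lambda)\lambda^{-n}$ for \emph{every} $n$. Letting $\lambda\downarrow 1$ gives $\chi_-(c)\ge 0$. The engulfment and escape issues you flag never arise, because the argument is a direct lower bound on $|Df^n(c)|$ rather than a geometric trap for the orbit of $c$.
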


See the next Section for the proof and the last Section for some applications.
%\marginpar{new Thm 1.4-1.5 and some comments added}
On the other hand, we have
\begin{theorem}\label{thm:typ}
Let $f(z)=z^d+c$, where $d\ge 2$ and $c\in \C$. Assume that the Julia set
%\marginpar{change}
$J(f)$ of $f$ has a positive area. Then for almost every $z\in J(f)$, there exists
$$\chi(f, z)=\lim_{n\to\infty} \frac{1}{n}\log |Df^n(z)|=0.$$
\end{theorem}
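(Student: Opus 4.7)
I split the claimed equality $\chi(f,z)=0$ into two one-sided bounds at Lebesgue-almost every $z\in J(f)$: an upper bound
\[
\chi_+(f,z):=\limsup_{n\to\infty}\tfrac{1}{n}\log|Df^n(z)|\le 0
\]
and a lower bound $\chi_-(f,z)\ge 0$; once both hold a.e., the common limit exists and equals $0$.

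For the upper bound I would invoke the companion statement announced in the abstract (and established elsewhere in the paper): any $z\in J(f)$ with $\chi_+(f,z)>0$ fails to be a Lebesgue density point of $J(f)$. Combined with the Lebesgue density theorem applied to the positive-area set $J(f)$, this forces $\chi_+(f,z)\le 0$ at almost every $z\in J(f)$. The idea behind that no-density-point statement is a Koebe pullback: exponential growth at $z$ along some subsequence $n_k$ lets one pull a fixed-size round disk about $f^{n_k}(z)$ back to a small, nearly round disk about $z$ with bounded distortion, and the basin of $\infty$ present near any $f^{n_k}(z)\in J(f)$ is carried back to a definite proportion of Fatou at every scale.

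For the lower bound, the starting point is Theorem~\ref{thm:main} at the critical value: since positive area of $J(f)$ precludes attracting cycles, $\chi_-(f,c)\ge 0$ with $c=f(0)$. I would then exploit the unicritical chain-rule identity
\[
\frac{1}{n}\log|Df^n(z)|=\log d+\frac{d-1}{n}\sum_{i=0}^{n-1}\log|f^i(z)|,
\]
which reduces $\chi_-(f,z)\ge 0$ to the quantitative statement that the forward orbit of $z$ does not approach the critical point $0$ too fast. A Borel--Cantelli-type argument then closes the loop: for every $\alpha>0$ one estimates the area of the preimages $f^{-n}(B(0,r_n))\cap J(f)$ at scales $r_n=e^{-\alpha n}$ by pulling $B(0,r_n)$ one step forward to $B(c,r_n^{\,d})$ and then pulling back by $f^{n-1}$ with bounded distortion along each backward branch, where the subexponential decay of $|Df^k(c)|^{-1}$ supplied by Theorem~\ref{thm:main} controls the sum over branches and makes the areas summable.

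The main technical obstacle is this preimage area estimate. Bounded-distortion pullbacks along long backward orbits require a careful decomposition of $f^{-(n-1)}$ according to the history of close approaches of past orbits to the critical value $c$, and Theorem~\ref{thm:main} enters precisely to tame the branches passing near~$c$.
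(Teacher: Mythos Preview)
Your upper-bound half is exactly what the paper does: invoke the porosity/no-density-point statement (Theorem~\ref{thm:up}) together with the Lebesgue density theorem.

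The lower-bound half has a genuine gap. The chain-rule identity
\[
\frac{1}{n}\log|Df^n(z)|=\log d+\frac{d-1}{n}\sum_{i=0}^{n-1}\log|f^i(z)|
\]
is correct, but it reduces $\chi_-(f,z)\ge 0$ to the \emph{Ces\`aro} condition $\liminf_n \frac{1}{n}\sum_{i=0}^{n-1}\log|f^i(z)|\ge -\frac{\log d}{d-1}$, not to a pointwise lower bound on $|f^i(z)|$. Your Borel--Cantelli at scales $r_n=e^{-\alpha n}$ would at best yield slow recurrence, i.e.\ $\log|f^i(z)|\ge -\alpha i - C_\alpha$; feeding that into the identity gives only $\frac{1}{n}\sum_{i=0}^{n-1}\log|f^i(z)|\ge -\alpha n/2 + O(1)$, which tends to $-\infty$. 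So even a perfectly executed Borel--Cantelli step would not close the loop through the chain rule. (One can write down abstract sequences $(a_i)$ with $a_i\le M$, $a_i/i\to 0$, $\limsup \frac{1}{n}\sum a_i\le L$ and yet $\liminf \frac{1}{n}\sum a_i<L$, so combining slow recurrence with the already-proved upper bound $\chi_+\le 0$ does not help either.)

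The paper does prove slow recurrence almost everywhere (Lemma~\ref{lem:slarea}), but it then deduces $\chi_-(z)\ge 0$ not from the chain rule but from the pullback derivative estimate Lemma~\ref{lem:pass}: for any $\lambda>1$,
\[
|Df^s(z)|\ge C(\lambda)\,\frac{\min\{1,\inf_{0\le i\le s}|f^i(z)-c|\}}{|z-c|}\,\lambda^{-s},
\]
so slow recurrence gives $|Df^s(z)|\ge C'\lambda^{-s}e^{-\alpha s}$ and hence $\chi_-(z)\ge -\log\lambda-\alpha$ for all $\lambda>1$, $\alpha>0$. This is where the real work sits; see Lemma~\ref{lem:slexp}.

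A secondary point: in the slow-recurrence step itself the paper does \emph{not} use Theorem~\ref{thm:main}. Instead it bounds the degree of $f^n$ on each component of $f^{-n}(B(0,e^{-\alpha n}))$ by observing that consecutive visits of any orbit to $B(0,\eps)$ are separated by time $\gtrsim \log(1/\eps)$ (a special case of a lemma from~\cite{P}), and then applies a multivalent Koebe theorem. Your proposed mechanism --- factoring through $B(c,r_n^d)$ and invoking $\chi_-(c)\ge 0$ to control branch sums --- would require summing $|Df^k|^{-2}$ over all backward branches, whereas Theorem~\ref{thm:main} controls only the single forward critical orbit; the decomposition you allude to is not spelled out and is not obviously workable.
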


Quadratic polynomials with a positive area Julia set do exist~\cite{BC}.

For the proof of Theorem~\ref{thm:typ}, see Section~\ref{typ}. To show that $\chi_{-}(f, z)\ge 0$
for a typical point we introduce the notion of a {\em slowly recurrent point} $z$ for a map $f\in \sU_{V,V'}$
and prove that $\chi_{-}(f, z)\ge 0$ for such $z$, see the next Section.
In Section~\ref{typ} we show that for $f(z)=z^d+c$ a.e. point of $J(f)$ is slowly recurrent.
In the opposite direction,
that any $z\in J(f)$ with a positive upper Lyapunov exponent $\chi_{+}(f, z)$ is not a density point of $J(f)$
is an immediately consequence of the following general fact:

\begin{theorem}\label{thm:up}
Let $g$ be a polynomial of degree at least $2$.
For every $\lambda>1$ there exist $\rho>0$, a positive integer $N$ and
$\alpha>0$ as follows. Suppose
%\begin{equation}\label{chi+}
$$\chi_+(g, z)=\limsup_{n\to\infty} \frac{1}{n}\log |Dg^n(z)|>2\log \lambda$$
%\end{equation}
for some $z\in J(g)$.
Then there exists a
subset $H$ of the positive
%\marginpar{change}
integers such that the upper density of $H$ is at least $\alpha$,
and for every $n\in H$,
if $V_n$ denotes a connected component of $g^{-n}(B(g^{n}(z),\rho))$
which contains the point $z$ then $V_n\subset B(z, \lambda^{-n}\rho)$ and
the map $g^{n}: V_n\to B(g^{n}(z),\rho)$ is at most $N$-critical.
%with the following properties:
%\begin{enumerate}
%\item the upper density of $H$ is positive, moreover,
%$$\limsup_{m\to \infty} \frac{\#\ \{H\cap \{1,...,m\}\}}{m}\ge \alpha,$$
%\item for every $n\in H$,
%if $V_n$ is a connected component of $g^{-n}(B(g^{n}(z),\rho))$
%which contains the point $z$ then
%the map $g^{n}: V_n\to B(g^{n}(z),\rho)$ is at most $N$-critical,
%\item $V_n\subset B(z, \lambda^{-n}\rho)$ whenever $n\in H$.
%\end{enumerate}
\end{theorem}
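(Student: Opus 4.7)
The plan is to combine the Pliss lemma with a shrinking--telescope argument for the backward pullback of a small disk about $g^n(z)$.

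Write $a_i := \log|Dg(g^{i-1}(z))|$ and let $C := \sup_{y\in J(g)}\log|Dg(y)|$, which is finite by the forward invariance and compactness of $J(g)$; in particular $a_i\le C$ for all $i$. By hypothesis $\limsup_n n^{-1}\sum_{i\le n}a_i > 2\log\lambda$. Fix $\mu\in(\lambda,\lambda^2)$, for instance $\mu=\lambda^{3/2}$. The Pliss lemma then produces a set $H\subset\N$ of upper density at least
\[
\alpha \;:=\; \frac{2\log\lambda-\log\mu}{C-\log\mu}\;>\;0
\]
with the property that for every $n\in H$ and every $0\le k<n$,
\[
|Dg^{n-k}(g^k(z))|\;\ge\;\mu^{n-k}.
\]

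For each such $n$ and for $j=0,1,\dots,n$, let $W^{(j)}$ denote the connected component of $g^{-j}(B(g^n(z),\rho))$ containing $g^{n-j}(z)$, so that $W^{(0)}=B(g^n(z),\rho)$ and $W^{(n)}=V_n$. Let $d^{(j)} := \deg(g^j\colon W^{(j)}\to W^{(0)})$. The next step is to prove, for $\rho=\rho(g,\lambda)$ sufficiently small, the two simultaneous inductive statements
\begin{enumerate}
\item[(a)] $d^{(j)}\le N$, where $N=N(g,\lambda)$ is a uniform constant, and
\item[(b)] $W^{(j)}\subset B\bigl(g^{n-j}(z),\,K\rho\mu^{-j}\bigr)$, where $K=K(N)$ is a Koebe-type constant.
\end{enumerate}
Statement (b) will follow from (a) via the Koebe distortion theorem applied to the univalent inverse branch of $g^j$ on the ``Koebe core'' $B(g^n(z),\rho/2)$ of $W^{(0)}$, combined with the derivative lower bound $|Dg^j(g^{n-j}(z))|\ge\mu^j$ coming from Pliss.

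The hard part will be statement (a), because of its circular coupling with (b): shrinking requires bounded degree (for Koebe) while bounded degree requires shrinking (to prevent critical captures). The plan for closing this loop is a bootstrap. Observe that $d^{(j)}/d^{(j-1)}\ge 2$ precisely when $W^{(j)}\cap \Crit(g)\ne\varnothing$, and by (b) such a capture at step $j$ forces $g^{n-j}(z)$ to lie within $K\rho\mu^{-j}$ of some critical point of $g$. Since $|\Crit(g)|\le\deg(g)-1$ and the Pliss lower bound on $|Dg|$ gives exponential separation between consecutive near-critical returns of the orbit of $z$ to any fixed $c_i\in\Crit(g)$, the total number of captures along the backward orbit can be bounded, uniformly in $n$, by some $M=M(g,\lambda,\rho)$ with $M(\rho)\to 0$ as $\rho\to 0$. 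Starting from a tentative bound $N_0$, deriving (b) with $K=K(N_0)$, and then shrinking $\rho$ so that $(\deg g)^{M(\rho)}\le N_0$, one closes the loop and obtains the uniform constant $N$.

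Finally, the inclusion $V_n=W^{(n)}\subset B(z,\lambda^{-n}\rho)$ is read off from (b) at $j=n$: since $\mu>\lambda$, $K\mu^{-n}\le\lambda^{-n}$ for all $n\ge n_0:=\lceil(\log K)/\log(\mu/\lambda)\rceil$, and discarding the at most $n_0$ indices of $H$ below $n_0$ does not decrease its upper density. That $g^n\colon V_n\to B(g^n(z),\rho)$ is at most $N$-critical is then immediate from (a) by Riemann--Hurwitz applied to this branched cover of disks.
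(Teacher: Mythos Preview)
Your skeleton---Pliss for a positive-density set of hyperbolic times, then a shrinking estimate plus a degree bound on the pullback---matches the paper's, but the step where you close the loop between (a) and (b) has a genuine gap.

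The unjustified sentence is: ``the Pliss lower bound on $|Dg|$ gives exponential separation between consecutive near-critical returns \dots\ [so] the total number of captures \dots\ can be bounded, uniformly in $n$, by some $M=M(g,\lambda,\rho)$ with $M(\rho)\to 0$ as $\rho\to 0$.'' The Pliss inequality $|Dg^{n-k}(g^k(z))|\ge\mu^{n-k}$ controls only the derivative from each time $k$ to the terminal time $n$; it gives no lower bound on the gap $k_2-k_1$ between two near-critical visits. And the number of captures at a hyperbolic time $n$ is the number of $k<n$ with $d(g^k(z),\Crit(g))<K\rho\,\mu^{-(n-k)}$, i.e.\ the number of ``shadows'' (in the paper's language) containing $n$. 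This quantity is \emph{not} uniformly bounded over all hyperbolic times, however small $\rho$ is, because a point of $J(g)$ can make arbitrarily deep excursions toward the critical set. So your bootstrap does not close: $K$ depends on $N_0$, $M$ depends on $K$ and is not driven to $0$ by shrinking $\rho$, and nothing forces $(\deg g)^{M}\le N_0$.

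The paper breaks the circularity by two devices you are missing. First, the \emph{telescope lemma} (Lemma~\ref{lem:hyp}, from \cite{P0}) proves the inclusion $W^{(j)}\subset B(g^{n-j}(z),C\delta\lambda^{-j})$ at every hyperbolic time \emph{with no degree hypothesis whatsoever}; this gives your (b) outright and decouples it from (a). Second, bounded degree is obtained only on a further subset of times: the external input is the estimate $\sum_{k<n}'(-\log d(g^k(z),Crit'))\le C_g n$ from \cite{DPU}, and a counting argument (Claim~1 in Section~\ref{up}) shows that the set of $n$ lying in more than $N$ shadows has density at most $C_g K_0/N$. Choosing $N$ so that this is below half the Pliss density $\theta$ and intersecting with the set of hyperbolic times yields $H$ with density at least $\theta/2$. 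In short: you need the telescope lemma to get shrinking without Koebe, and the \cite{DPU} bound plus shadow counting to get bounded degree---on a subset of the hyperbolic times, not on all of them.
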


For the proof of a yet more general version, see Section~\ref{up}.
See also Corollary~\ref{renorm}.

Finally, for the unicritical polynomials we have the following
\begin{theorem}\label{thm:backward}
Suppose $f(z)=z^d+c$ has no an attracting cycle in $\C$.
Let $\bar x=\{x_{-n}\}_{n=0}^\infty$, $x_0=0$, $f(x_{-n})=x_{-(n-1)}$, $n>0$,
be a backward orbit of $0$. Then
$$\chi_-^{back}(f, \bar x):=\liminf_{n\to\infty} \frac{1}{n}\log |Df^n(x_{-n})|\ge 0.$$
\end{theorem}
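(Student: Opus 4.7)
The plan uses the algebraic identity
\[ f^{n+1}(z)-c=(f^n(z))^d, \]
which is immediate from $f(w)=w^d+c$. Equivalently, the $f^{n+1}$-pullback of $B(c,\rho)$ coincides with the $f^n$-pullback of $B(0,\rho^{1/d})$. Near the point $x_{-n}$, which satisfies $f^n(x_{-n})=0$, the identity gives the local expansion
\[ f^{n+1}(z)-c=(Df^n(x_{-n}))^d (z-x_{-n})^d+O(|z-x_{-n}|^{d+1}), \]
identifying $(Df^n(x_{-n}))^d$ as a B\"ottcher-like coefficient of the degree-$d$ branching of $f^{n+1}$ at $x_{-n}$. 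Bounding $|Df^n(x_{-n})|$ from below thus reduces to controlling the size of the pullback of a disk centered at the critical point $0$ under $f^n$.

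I would first treat the non-recurrent case $0\notin\omega(c)$, equivalently $\eta:=\inf_{j\geq 0}|f^j(c)|>0$. Fix $r\in(0,\eta)$. The disk $B(0,r)$ then contains no critical value $f^j(c)$ of $f^n$ for $0\leq j\leq n-1$, so the component $W_n$ of $f^{-n}(B(0,r))$ containing $x_{-n}$ is mapped univalently onto $B(0,r)$. Koebe gives $\diam(W_n)\asymp r/|Df^n(x_{-n})|$. Since $f$ is a polynomial, $W_n$ is uniformly bounded in $n$ (backward iterates of any fixed bounded set under a polynomial stay inside a fixed disk, controlled by the escape radius), so $|Df^n(x_{-n})|$ is bounded below by a positive constant, and $\chi_-^{back}(f,\bar x)\geq 0$.

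In the recurrent case $0\in\omega(c)$, a fixed $r$ no longer works, and one uses the dynamical scale $r_n:=\tfrac12\min_{0\leq j\leq n-1}|f^j(c)|$, which tends to $0$ with $n$. The same Koebe argument on the univalent pullback $W_n$ of $B(0,r_n)$ still gives $|Df^n(x_{-n})|\gtrsim r_n$, and the theorem reduces to proving $\liminf_{n\to\infty}\tfrac{1}{n}\log r_n\geq 0$, i.e.\ that the forward critical orbit does not approach $0$ faster than subexponentially. This is where Theorem~\ref{thm:main} enters: its conclusion $\chi_-(f,c)\geq 0$, combined with the no-attracting-cycle hypothesis, is used to rule out exponentially deep returns of the critical orbit, typically via a shrinking-lemma or polynomial-like-map construction in which a close return $|f^{j_0}(c)|\leq e^{-\alpha n}$ would force a near-periodic structure for the critical orbit, producing (upon iteration) an actual attracting or parabolic cycle and contradicting the hypothesis.

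The main obstacle is the last step in the recurrent case. The bound $\chi_-(f,c)\geq 0$ is a Ces\`aro-average statement about $\tfrac{1}{n}\log|Df^n(c)|$, and does not by itself preclude a single $|f^{j_0}(c)|$ from being exponentially small in $n$, since one very negative term can in principle be compensated by the others in the average. Extracting the pointwise subexponential bound on $r_n$ therefore requires an argument of the same flavor as the proof of Theorem~\ref{thm:main} itself, now applied with base point near the hypothetical close return; this renormalization-style argument is the technical core of the proof.
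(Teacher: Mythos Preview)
Your reduction in the recurrent case to the slow-recurrence statement
\[
\liminf_{n\to\infty}\tfrac{1}{n}\log\min_{0\le j\le n-1}|f^j(c)|\ge 0
\]
is correct, and you are right that this is \emph{not} a formal consequence of $\chi_-(f,c)\ge 0$: a single exponentially deep return can be averaged out. You flag this honestly as ``the main obstacle'', but you do not actually prove it; the ``near-periodic structure forces an attracting cycle'' sketch is not an argument as it stands. So the proposal is an outline with a genuine gap at the decisive step.

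The paper avoids this difficulty altogether by a different route. Instead of pulling back a single disk $B(0,r_n)$ whose radius is dictated by the forward critical orbit, it runs the telescoping close-returns machinery of Section~\ref{1} directly along the orbit segment $x_{-n},x_{-(n-1)},\ldots,x_0=0$. Concretely, a polynomial variant of Lemma~\ref{lem:return} is proved (Lemma~\ref{lem:returnpoly}): for $\lambda>1$ there is $\delta_0$ such that if $z\notin B(0,\delta)$ and $n$ is the first entry time of $z$ into $\overline{B(0,\delta)}$, then $|Df^n(z)|\ge \tfrac{\delta}{12|z|}\lambda^{-n}$, and moreover $|Df^n(z)|\ge\lambda^{-n}$ when $|z|=\delta$. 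Chaining these at the successive record-minima of $|f^j(z)|$ yields Lemma~\ref{lem:closereturn}: if $|f^n(z)|\le\delta_0$ and $|f^n(z)|\le|f^j(z)|$ for all $0\le j<n$, then
\[
|Df^n(z)|\ge \min\!\Bigl(\tfrac{\delta_0}{12|z|},1\Bigr)\,\lambda^{-n}.
\]
Now take $z=x_{-n}$. Since $f^n(x_{-n})=0$, the hypotheses are satisfied trivially ($0$ is the minimum of any nonnegative sequence), and one gets $|Df^n(x_{-n})|\ge K_n\lambda^{-n}$ with $K_n$ bounded below because $|x_{-n}|$ is bounded. Letting $\lambda\downarrow 1$ finishes the proof.

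The point is that the paper never needs to know how close the \emph{forward} critical orbit comes to $0$; the shrinking-target lemma is applied to $x_{-n}$ itself, and the end condition $f^n(x_{-n})=0$ makes the minimum condition automatic. Your approach trades this for a statement about $r_n$ which, while plausibly true, is essentially as hard as the theorem and would have to be proved by the same telescoping argument anyway. If you want to salvage your line, the cleanest fix is precisely to prove Lemma~\ref{lem:closereturn} and apply it to $z=c$ at record return times --- but at that point you have reproduced the paper's proof and the detour through $r_n$ is unnecessary.
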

This theorem can be deduced from Theorem~\ref{uni} by modifying the proof of Proposition 1 in \cite{GS}.
(We leave the details to an interested reader.) In Section~\ref{typ},
we shall provide a proof based on a modification of our argument in Section~\ref{1}.
%\marginpar{2 new subsections and another one moved from the end here}
\subsection{On maps with several critical values}\label{high}
There are polynomials with all periodic points repelling such that the lower Lyapunov
exponent at one of the critical values exists and is negative or even equals $-\infty$
as examples of multi-critical semi-hyperbolic, but not Collet-Eckmann polynomials show~\cite{PR0},~\cite{PRS}.
Indeed, in~\cite{PR0} a real polynomial $P$ of degree $4$ is constructed such that
all $3$ critical points of $P$  are real and non-degenerate,
two of them $q_1, q_2$ lie in the Julia set of $P$ and obey the following properties:
$P^2(q_1)$ is a repelling fixed point of $P$,
$q_2$ is non-recurrent, $P^i(q_2)\not=q_1$ for all $i\ge 0$ while $\chi_-(P, P(q_2))=-\infty$.
For a somewhat similar explicit example of degree $3$ real cubic polynomial such that the lower Lyapunov exponent at its critical value
is finite and negative, see \cite{PRS}.
\subsection{On methods of the proof}\label{comp}
As mentioned above, for S-unimodal map without periodic attractor, it was proved in~\cite{nowsan} that the lower
Lyapunov exponent of
the critical value is non-negative. The proof by Nowicki and Sands relies heavy on the order of the real line.
Using cross-ratio techniques, they found
some universal derivative and distortion bounds (related but different bounds were obtained also in \cite{Martens})
which enabled them
to compare the derivative along an orbit with the multiplier of certain periodic orbit and
thus obtain lower bounds for derivatives of
the first return maps to small neighborhoods of the critical value. To complete the proof,
they also use Ma\~n\'e's result which
asserts that an S-unimodal map without periodic attractor is uniformly expanding outside a neighborhood of the critical point.

Our
theorems cover cases where universal distortion bounds are unknown (e.g. infinitely renormalizable uncritical polynomials)
and also
cases where Ma\~n\'e's result is not applicable (e.g. maps with a Cremer periodic point).
Our approach to the lower Lyapunov exponent is
a telescope argument. Estimating the size of a ball which can be pulled back conformally and the resulting topological disk, we
derive our estimates from the (complex) Koebe distortion theorems. See Lemmas~\ref{lem:return} and~\ref{lem:awayfrom0}
in the next Section~\ref{1}.

Unlike similar arguments used frequently in
the study of expansion property of one-dimensional maps (see e.g. \cite{Nowicki, Przytycki, GS, BvS, RS}),
our argument does not make
use of any a priori knowledge of the critical orbits.
\subsection{Some motivations and historical remarks.}\label{hist}
In 1-dimensional dynamics often  the asymptotic behaviour of derivatives
along typical trajectories is reflected in the asymptotic behaviour of
derivatives along critical trajectories. E.g. hyperbolicity is equivalent to
the attraction of all critical trajectories to attracting periodic orbits.
This implies
 $\chi_-(c)< 0$ for all critical values $c$. This paper provides converse
theorems in case of a single critical point involved.

Another theory is that the `strong non-uniform hyperbolicity condition'
saying that there is
$\chi>0$ such that for all probability invariant measures $\mu$ on Julia set
$J$ for a rational map $g$\; $\chi_\mu(g):=\int \log |g'|\,d\mu\ge \chi$,  is
equivalent to so called Topological Collet-Eckmann condition, see e.g.~\cite{PRS}.
The latter in
presence of only one critical point in $J$
%(and no parabolic orbits)
is equivalent to the Collet-Eckmann
condition, which says that $\chi_-(g, c)>0$ for each critical value $c\in J$ whose forward trajectory contains no critical points
(and $g$ has no parabolic orbits), see also Remark~\ref{qcinv}.

A motivation to Theorem~\ref{uni} has been the theorem saying that for all $\mu$ as
above $\chi_\mu(g)\ge 0$, see~\cite{P}.
In particular for $\mu$-almost every $x\in J$, for any $\mu$,\;
$\chi(g,x)\ge 0$.
This suggested the question whether critical values also have this property
(under appropriate assumptions).

{\bf Acknowledgment.} We thank the referee for helpful recommendations.

\section{Proof of Theorem~\ref{thm:main}}\label{1}
Let $f: V\to V'$ be a map in $\sU$ and let $c=c(f), \rho=\rho(f)$.
Furthermore,
%\marginpar{removed}
%let
%$$\Lambda(f)=\{z\in V: f^n(z)\in V\text{ for all } n=0,1,\ldots\},$$
%and
let $AB(f)$ denote the union of the basin of attracting cycles of $f$.
So when $f$ has no attracting cycle, $AB(f)=\emptyset$.
%\marginpar{change}

We need two lemmas for the proof of Theorem~\ref{thm:main}.
In the first Lemma
a general construction is introduced which is used also
later on. Throughout the proofs, the Koebe principle applies.

\begin{lemma} \label{lem:return}
Assume that $c$ is not a periodic point. Given $\lambda>1$ there exists $\delta_0$, such
that for each $\delta\in (0, \delta_0)$,
if
$n\ge 1$ is the first entry time of $z\notin
AB(f)$ into $\overline{B(c;\delta)}$, then
%\marginpar{change} we have:
%\marginpar{change}
\begin{itemize}
\item if either $|z-c|\le \delta$ or there is no
neighborhood of $z$ such that $f^n$ maps it conformally onto
$U_n=B(f^n(z), |f^n(z)-c|)$, then
\begin{equation}\label{lambdadelta}
|Df^{n}(z)|\ge \lambda^{-n}\frac{|f^n(z)-c|}{\max\{\delta, |z-c|\}};
\end{equation}
\item otherwise, i.e., if $|z-c|>\delta$ and $f^{n}$ maps a neighborhood of $z$ conformally onto
$U_n$, then
\begin{equation}\label{nolambdadelta}
|Df^{n}(z)|\ge \frac{|f^n(z)-c|}{12|z-c|}.
\end{equation}
\end{itemize}
\end{lemma}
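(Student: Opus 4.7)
The proof should rest on a classical pullback construction along the finite orbit segment $z_0=z,z_1,\ldots,z_n$ with $z_n\in\overline{B(c;\delta)}$, exploiting that the first-entry property forces $|z_k-c|>\delta$ for every $k<n$. Starting from the disk $U_n:=B(z_n,|z_n-c|)$, which is simply connected and has $c$ only on its boundary, one defines inductively $U_k$ as the connected component of $f^{-1}(U_{k+1})$ containing $z_k$. By (U1), the map $f\colon V\setminus f^{-1}(c)\to V'\setminus\{c\}$ is an unbranched covering, so each step $f\colon U_k\to U_{k+1}$ is a conformal bijection \emph{as long as} $c\notin U_{k+1}$. The assumption that $c$ is not periodic is used to rule out the pathological case where the pullback eventually returns to $c$ itself.

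For the good case (the second bullet, with the absolute constant $12$), the hypothesis is that $f^n$ maps some neighborhood $W\ni z$ conformally onto $U_n$. Since $n\ge 1$ and the map is injective on $W$, the critical point $c$ cannot belong to $W$. Let $\psi\colon U_n\to W$ denote the inverse branch with $\psi(z_n)=z$. The Koebe $1/4$-theorem applied to $\psi$ yields $W\supseteq B(z,\tfrac14|\psi'(z_n)|\cdot|z_n-c|)$, and combining this with $c\notin W$ gives $|\psi'(z_n)|\le 4|z-c|/|z_n-c|$, hence the lower bound $|Df^n(z)|\ge|f^n(z)-c|/(4|z-c|)$. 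To upgrade the constant $4$ to $12$ as stated, one applies the Koebe distortion inequality on the concentric half-disk, where the sharp distortion constant is $(1+\tfrac12)/(1-\tfrac12)^3=12$.

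For the weaker bound (the first bullet, with the $\lambda^{-n}$ factor), the idea is to choose $\delta_0$ small depending on $\lambda$ and to track the pullback of a slightly shrunken disk $U'_n\subset U_n$ step by step. At each step the intermediate disk $U_k$ is controlled, via Koebe distortion on the pair $(U_k, B(z_k,|z_k-c|))$, where the outer disk avoids $c$ thanks to the first-entry condition $|z_k-c|>\delta$. Choosing $\delta_0$ so small that $\mathrm{diam}(U_k)/|z_k-c|$ remains below a prescribed threshold forces the distortion incurred at each of the $n$ pullback steps to be at most $\lambda$; accumulating over $n$ steps produces the factor $\lambda^n$ in the denominator, and the case $|z-c|\le\delta$ (where the weaker denominator $\max\{\delta,|z-c|\}=\delta$ is used) is handled by applying the same construction with $|z-c|$ replaced by $\delta$ as a uniform lower bound.

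The main obstacle is ensuring that the pullback construction does not break down: if $c\in U_{j+1}$ for some intermediate $j$, then the chain of conformal pullbacks terminates early. This is precisely the trigger for the weaker ``case 1'' estimate, and the combinatorial argument must show that whenever the conformal pullback of the full disk $U_n$ fails, one can still extract the $\lambda^{-n}$ bound by restricting to a slightly smaller target disk on which the pullback does succeed, at the cost of absorbing the distortion loss into the factor $\lambda^{-n}$. Uniform control in terms of the single parameter $\delta_0(\lambda)$, independent of $n$ and $z$, is the technically delicate point; the uniqueness of the critical value $c$ guaranteed by (U1) together with the buffer $\rho=\rho(f)$ from (U2) is what makes the geometric bookkeeping tractable.
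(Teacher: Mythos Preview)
Your outline for the second bullet (the ``good'' case) has a slip: you claim $c\notin W$ because $f^n$ is injective on $W$, calling $c$ the ``critical point''. But $c$ is the critical \emph{value}; the branch points of $f$ lie in $f^{-1}(c)$, and $c$ itself need not be one of them (e.g.\ for $z^d+c$ with $c\ne 0$). The paper instead shows $U_0\not\supset\overline{B(c,2\delta)}$ by a Schwarz--lemma argument: if $U_0\supset\overline{B(c,2\delta)}$ then $f^{n}(U_0)=B(z_n,\tau_0)\subset\overline{B(c,2\delta)}\subset U_0$, forcing $z\in AB(f)$. From this one gets $\dist(z,\partial U_0)\le |z-c|+2\delta<3|z-c|$, and Koebe~$\tfrac14$ then gives exactly the constant $12=4\cdot 3$. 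Your ``upgrade from $4$ to $12$'' is the wrong direction.

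The real gap is in the first bullet. Your plan is to accumulate a per-step distortion factor $\lambda$ over all $n$ steps by keeping $\diam(U_k)/|z_k-c|$ below a threshold through a small choice of $\delta_0$. But shrinking $\delta_0$ only shrinks the \emph{lower} bound $|z_k-c|>\delta$, so the ratio is not controlled; there is no mechanism here that makes each single pullback step a near-isometry. The paper's argument is structurally different. It defines $\tau_n=|z_n-c|$ and, working backwards, lets $\tau_i\le\tau_{i+1}$ be the largest radius for which $f^{n-i}$ maps a neighbourhood $U_i$ of $z_i$ conformally onto $B(z_n,\tau_i)$. Strict drops $\tau_i<\tau_{i+1}$ occur only at indices $i\in\mathcal{I}$ where $c\in\partial f(U_i)$; a modulus computation (using again $U_{i+1}\not\supset\overline{B(c,2\delta)}$) gives a \emph{universal} bound $\tau_{i+1}/\tau_i\le K$ at these indices. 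The crucial step is then to bound $N:=\#\mathcal{I}$: each $i\in\mathcal{I}$ forces the point $f^{n-i-1}(c)$ into $B(c,2\delta)$, so consecutive indices in $\mathcal{I}$ are separated by at least the first-return time $C(\delta)$ of $B(c,2\delta)$ to itself, giving $N\le n/C(\delta)$. Non-periodicity of $c$ enters precisely here: it guarantees $C(\delta)\to\infty$ as $\delta\to 0$, so one can choose $\delta_0$ with $12K\le\lambda^{C(\delta_0)}$, whence $12K^N\le\lambda^{n}$. Your description of the role of non-periodicity (``ruling out the pathological case where the pullback returns to $c$'') misses this quantitative point, which is the heart of the proof.
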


\begin{proof} %Fix $\beta=\beta(d)=2^{1/(d-1)}$. Note that $\beta^d\le 4$ and $\beta-1> (\log 2)/d$.
Let $\delta\in (0,\rho/2]$. Let $n\ge 1$ and $z\not\in AB(f)$ be as in the Lemma, and write $z_i=f^i(z)$.
Let $\{\tau_i\}_{i=0}^n$ be a sequence of positive numbers with the following properties:
\begin{enumerate}
\item $\tau_n=|z_n-c|$ and $U_n=B(z_n, \tau_n)$;
%\marginpar{notice the change of $\tau_n$}
\item for each $0\le i< n$, $\tau_{i}$ be the maximal number such that
\begin{itemize}
\item $0< \tau_{i}\le \tau_{i+1}$ and
\item $f^{n-i}$ maps a neighborhood $U_i$ of $z_i$ conformally onto $B(z_n, \tau_{i})$.
\end{itemize}
\end{enumerate}

%For each $1\le i<n$, let $V_i$ denote the component of $f^{-(n-i)}(B(z_i, \tau_{i+1}))$. So $V_i\supset U_i$.

Let $$\mathcal{I}=\{0\le i<n: \tau_{i}<\tau_{i+1} \}$$ and let $$N=\# \mathcal{I}.$$
Note that for each $i\in \mathcal{I}$, $c\in \partial f(U_i)$.
Since $f^{n}$ maps $U_0$ conformally onto $B(z_n, \tau_0)$, by the Koebe $\frac{1}{4}$ Theorem, we have
\begin{equation}\label{eqn:dertau0}
|Df^{n}(z)|\ge \frac{\tau_0}{ 4\eps_0}, \text{ where } \eps_0=\dist(z_0, \partial U_0).
\end{equation}

{\bf Claim 1. } There exists a universal constant $K>1$ such that for each $i\in\mathcal{I}$, we have
$\tau_{i+1}\le K \tau_i$. Moreover,
\begin{equation}\label{eqn:diamU1}
\eps_0\le 2 \delta+|z-c|\le 3 \max\{\delta, |z-c|\}.
\end{equation}

\begin{proof}[Proof of Claim 1]
We first note that for each $0\le i<n$, $U_{i}\not\supset \overline{B(c, 2\delta)}$
for otherwise,
$$f^{n-i}(U_{i})= B(z_n, \tau_{i})\subset \overline{B(c, 2\delta)}\subset U_{i},$$
which implies by the Schwarz lemma that $z_{i}$, hence $z_0$, is contained in the basin of an attracting cycle of $f$, a contradiction!
The inequality (\ref{eqn:diamU1}) follows.

Now let $i\in \mathcal{I}$. Then $i<n-1$ and $U_{i+1}\supset \partial f(U_i)\ni c$, so
\begin{equation}\label{diam}
\diam (f(U_i))\ge |c-z_{i+1}|\ge \delta.
\end{equation}
Since $U_{i+1}\not\supset \overline{B(c,2\delta)}$,
it follows that $\textrm{mod}(U_{i+1}\setminus f(U_i))$ is bounded from above by a universal constant.
Since $f^{n-i-1}: U_{i+1}\to B(z_n, \tau_{i+1})$ is a conformal map, we have
$$\textrm{mod}(U_{i+1}\setminus f(U_i))=\log \frac{\tau_{i+1}}{\tau_i}.$$
Thus $\tau_{i+1}/\tau_i$ is bounded from above by a universal constant.
\end{proof}

By (\ref{eqn:dertau0}), it follows that
%\marginpar{changes below}
\begin{equation}\label{genin}
|Df^n(z_0)|\ge \frac{|\tau_n|}{\max\{\delta, |z-c|\}} (12 K^N)^{-1}.
\end{equation}
Since $c$ is not a periodic point,
$$C(\delta)=\inf\{m\ge 1: \exists z\in B(c,2\delta) \text{ such that } f^m(z)\in B(c,2\delta)\}\to \infty$$
as $\delta\to 0$. Thus given $\lambda>1$, there is $\delta_0>0$ such that when $\delta\in (0,\delta_0]$ we have
$$12K\le \lambda^{C(\delta)}.$$
For $i<i'$ in $\mathcal{I}\cup \{n-1\}$, we have $w:=f^{n-i'-1}(c)\in B(c;2\delta)$ and $f^{i'-i}(w)=f^{n-i-1}(c)\in B(c,2\delta)$,
$i'-i\ge C(\delta)$. Thus $n\ge C(\delta) N.$
Consider several cases. If $N\ge 1$, then
$$\lambda^n\ge \lambda^{C(\delta)N}\ge 12 K^N.$$
By~(\ref{genin}), the inequality~(\ref{lambdadelta}) holds in this case.
If $|z-c|\le \delta$,
since $z\in B(c;2\delta)$ and $f^n(z)\in B(c,2\delta)$ we have $n\ge C(\delta)$.
Hence, if $|z-c|\le \delta$ and $N=0$,
$$12\le \lambda^{C(\delta)}\le \lambda^n.$$
By~(\ref{genin}), then~(\ref{lambdadelta}) holds again.
Finally, if $N=0$, by~(\ref{genin}), the inequality~(\ref{nolambdadelta}) holds.
%Then
%\begin{equation}\label{eqn:nvsN}
%n\ge  \max(1, N) C(\delta)\ge C(\delta)(N+1)/2,
%\end{equation}
%hence,
%$$12K^N\le \lambda^{C(\delta)(N+1)/2},$$
%hence \marginpar{change}
%\begin{equation}\label{eqn:dertau2}
%|Df^n(z)|\ge \frac{|z_n-c|}{\max\{\delta, |z-c|\}} \lambda^{-C(\delta)(N+1)/2}.
%\end{equation}
%To complete the proof, let us show that
%\begin{equation}\label{eqn:nvsN}
%n\ge  \max(1, N) C(\delta)\ge C(\delta)(N+1)/2.
%\end{equation}
%Indeed,
%for $i<i'$ in $\mathcal{I}\cup \{n-1\}$, we have $w:=f^{n-i'-1}(c)\in B(c;2\delta)$ and $f^{i'-i}(w)=f^{n-i-1}(c)\in B(c,2\delta)$,
%$i'-i\ge C(\delta)$. Thus $n\ge C(\delta) N.$
%Moreover, since $z\in B(c;2\delta)$ and $f^n(z)\in B(c,2\delta)$ we have $n\ge C(\delta)$.
%The inequality follows.
%Combining (\ref{eqn:dertau2}) and (\ref{eqn:nvsN}), we obtain the inequality (\ref{lambdadelta}).
\end{proof}

\begin{lemma}\label{lem:awayfrom0}
There exists $M=M(f)>1$ and given $\lambda>1$ and $\delta\in (0, 1)$ there exists
$\kappa=\kappa(\delta, \lambda)$ such that whenever $z\notin AB(f)$,
$|f^j(z)-c|\ge \delta$ holds for $0< j\le n$
and $B(f^n(z),\delta)\subset V'$, we have
\begin{equation}\label{awayfrom0}
|Df^n(z)|\ge \frac{\kappa \lambda^{-n}}{\max\{M, |z-c|\}}.
\end{equation}
\end{lemma}

\begin{proof} Fix $\lambda$ and $\delta$.
%Note that by the property (U3) we may assume that
%\begin{equation}\label{zznbdd}
%|f^n(z)-c|\le M.
%\end{equation}
We define the numbers $\tau_i$, domains $U_i$, $0\le i\le n$ and the number $\eps_0$
%and the set of indexes $\mathcal{I}$
as in the proof of Lemma~\ref{lem:return}, with the only difference that
we start with $\tau_n=\delta$.
Let us show that there exists $M=M(f)$ such that
\begin{equation}\label{eqn:Ui2}
U_i\not\supset \overline{B(c, M+\delta)}\text{ for each }0\le i<n.
\end{equation}

If $V\not=\C$, we define
%\marginpar{change}
$M=\dist(c, \partial V)+1$. Then~(\ref{eqn:Ui2}) is obvious since $U_i$ must be in $V$.
If $V=\C$, then also $V'=\C$. In this case,
$f: \C\to \C$ is either a polynomial or a transcendental entire function.
It always has a (finite) periodic orbit $P$ (this fact is trivial for polynomials, and was proved by Fatou for
entire functions). Define $M=\max\{|w-c|: w\in P\}+1$.
%\marginpar{change}
Then~(\ref{eqn:Ui2}) holds,
for otherwise, we would have that $B(z_n, \delta)=f^{n-i}(U_i)\supset f^{n-i}(P)=P$, hence,
$B(z_n, \delta)\subset B(c, M+\delta)$.Then
$\overline{f^{n-i}(U_i)}=\overline{B(z_n, \delta)}\subset \overline{B(c, M+\delta)}\subset U_i$,
which would then imply $z_i\in AB(f)$ and hence $z\in AB(f)$, a contradiction.

It follows that
\begin{equation}\label{eqn:eps0second}
\eps_0\le M+|z-c|+\delta\le 3\max\{M, |z-c|\}.
\end{equation}

To complete the proof, we need to consider the following set of indexes
$$\mathcal{I}_\lambda=\{i\in
\mathcal{I}: \lambda \tau_i\le \tau_{i+1}\}.$$

For each $i\in\mathcal{I}_\lambda$, $\diam (f(U_i))\ge |z_{i+1}-c|\ge \delta$.
%\marginpar{Please check!}
Since $f^{n-i-1}$ maps $f(U_i)$ onto $B(z_n, \tau_i)$ with a distortion which depends merely on
$\lambda$, there exists $\alpha=\alpha(\lambda)>0$ such that
\begin{equation}\label{eqn:alpha}
B(z_{i+1}, \alpha \delta) \subset f(U_i).
\end{equation}
Moreover, by (\ref{eqn:Ui2}), there exists $K=K(\delta, \lambda)>0$ such that
\begin{equation}\label{eqn:K}
\frac{\tau_{i+1}}{\tau_i}=e^{\mod(U_{i+1}\setminus f(U_i))}\le K.
\end{equation}
Furthermore, by (\ref{eqn:Ui2}) and $\tau_{i+1}\ge \lambda \tau_i$, there exists a constant $D=D(\delta,\lambda)>0$ such that
\begin{equation}\label{eqn:D}
|z_{i+1}-c|\le D.
\end{equation}

Let us prove that there exists $m_0=m_0(\lambda,\delta)$ such that $\#\mathcal{I}_\lambda\le m_0.$
To this end, let $i(0)<i(1)<\cdots<i(m-1)$ be all the elements of $\mathcal{I}_\lambda$. For each $0\le j\le j'<m$, we have
%there exists $m_1=m_1(D,\alpha)$ such that for any $m_1$ points in $\overline{B(c, D)}$, there are two of them which %are $\alpha\delta/3$-close.
%On the other hand,
%for each $i\le i'$ in $\mathcal{I}_\lambda$, we have
$$\mod(U_{i(j')+1}\setminus f^{i(j')-i(j)+1}(U_{i(j)}))=
\log \frac{\tau_{i(j')+1}}{\tau_{i(j)}}\ge \log\lambda\cdot (j'-j+1).$$
By (\ref{eqn:Ui2}),
%and (\ref{eqn:D}),
it follows that there exists $m_1=m_1(\delta,\lambda)$ such that for $0\le j<j'<m$ with $j'-j\ge m_1$,
$\diam (f^{i(j')-i(j)+1}(U_{i(j)}))\le \alpha\delta/2$. For such $j, j'$, since $z_{i(j)+1}\not\in AB(f)$, $f^{i(j')-i(j)+1}(U_{i(j)})$ is not properly contained in $f(U_{i(j)})$, and thus by (\ref{eqn:alpha}), we have $|z_{i(j)+1}-z_{i(j')+1}|\ge \alpha\delta/2$.  In particular, the distance between
any two distinct points in the set
$\{z_{i(km_1)}: 0\le k< m/m_1\}$
is at least $\alpha\delta/2$. By (\ref{eqn:D}), the last set is contained in a bounded set $\overline{B(c,D)}$, thus its cardinality is bounded from above by a constant. Thus $m=\#\mathcal{I}_\lambda$ is bounded from above.

%Then the statement holds with $m_0=m_1m_2$. Indeed, otherwise, we would find $i<i'$ in $\mathcal{I}_\lambda$ such %that $\#(\mathcal{I}_\lambda\cap [i,i'])\ge m_2$ while $|z_{i+1}-z_{i'+1}|\le \alpha\delta/3$.

It follows that
$$\tau_0\ge \tau_n K^{-m_0}\lambda^{-(n-m_0)}\ge \delta K^{-m_0}\lambda^{-n}. $$
So
$$|Df^{n}(z)|\ge \frac{\tau_0}{4 \epsilon_0}\ge\frac{\kappa \lambda^{-n}}{\max\{M, |z-c|\}},$$
where $\kappa=\delta K^{-m_0} 4^{-1}$.
%\delta (4\epsilon_*)^{-1}
%K^{-m}\lambda^{-n}$$ and
%$$
%|Df^{n}(z)|\ge d\delta^{d-1} |Df^{n-1}(f(z))|\ge \kappa
%\lambda^{-n},$$ where $\kappa=d\delta^d (4\epsilon_*)^{-1} K^{-m}$.
\end{proof}
%\marginpar{New remark}
\begin{remark}
Lemma~\ref{lem:awayfrom0} is valid for maps with several singular values $c_1,...c_m$. Then
the condition is that $|f^j(z)-c_i|\ge \delta$ holds for $0< j\le n$ and $1\le i\le m$ and the maximum
in~(\ref{awayfrom0}) should be taken over all $c_i$. The proof holds the same with obvious changes.
On the other hand, the proof of Lemma~\ref{lem:return} relies
on the assumption that there exists only one singular value.
\end{remark}
As a corollary of Lemmas~\ref{lem:return}-\ref{lem:awayfrom0}, we have
%\marginpar{revised}
\begin{lemma}\label{lem:pass}
Assume that $c$ is not a periodic point.
%\marginpar{Changes!}
Given $\lambda>1$ and $\sigma\in (0,1)$ there exists $C=C(\lambda, \sigma)>0$
such for every $z_0\notin AB(f)$ and $s\ge 1$ whenever
%$f^i(z_0)\not=c$ for $1\le i\le s$ and
$B(f^s(z_0),\sigma)\subset V'$, we have:
if $z_0=c$, then
$$|Df^s(c)|\ge C \lambda^{-s},$$
and if $z_0\not=c$, then
$$|Df^s(z_0)|\ge C \frac{\min\{1, \inf_{i=0}^s |f^i(z_0)-c|\}}{|z_0-c|} \lambda^{-s}.$$
%As a special case, if $|z_0-c|=\inf_{i=0}^s |f^i(z_0)-c|\le 1$ then
%$$|Df^s(z_0)|\ge C \lambda^{-s}.$$
%In particular, the latter inequality holds for $z_0=c$
\end{lemma}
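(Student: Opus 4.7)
My plan is to chain the estimates of Lemmas~\ref{lem:return} and~\ref{lem:awayfrom0} along the orbit $\{f^i(z_0)\}_{i=0}^{s}$, splitting it at the successive entry times into a small neighborhood of $c$. As a preliminary step, I would reduce to the case when $c$ is non-periodic, so that Lemma~\ref{lem:return} can be invoked: since for $f\in\sU$ the point $c$ is the unique critical value, any periodic orbit of $c$ would contain a critical point and hence be super-attracting, placing $c\in AB(f)$; this handles the periodic case directly via the hypothesis, and in the remaining cases (say $z_0\ne c$ in a super-attracting configuration) a short argument using only Lemma~\ref{lem:awayfrom0} on the portion of the orbit that stays away from the super-attracting cycle finishes the proof.

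Under the assumption that $c$ is non-periodic I would fix $\delta=\delta(\lambda,\sigma)>0$ with $\delta\le\min\{\delta_0(\lambda),\sigma\}$, where $\delta_0$ comes from Lemma~\ref{lem:return}, and let $0\le t_1<t_2<\cdots<t_k\le s$ be the successive entry times of $\{f^i(z_0)\}$ into $\overline{B(c,\delta)}$, setting $t_0=0$ by convention. I would then write
\[
Df^s(z_0)=Df^{s-t_k}(f^{t_k}(z_0))\cdot\prod_{j=1}^{k}Df^{t_j-t_{j-1}}(f^{t_{j-1}}(z_0))
\]
and bound each term of the product by Lemma~\ref{lem:return} applied to $z=f^{t_{j-1}}(z_0)$ with $n=t_j-t_{j-1}$: case~(a) automatically applies for $j\ge 2$ because $f^{t_{j-1}}(z_0)\in\overline{B(c,\delta)}$, giving $\lambda^{-(t_j-t_{j-1})}|f^{t_j}(z_0)-c|/\delta$, while for $j=1$ either case can occur depending on $|z_0-c|$. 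The final factor is then bounded by Lemma~\ref{lem:awayfrom0}, since on $(t_k,s]$ the orbit stays at distance $>\delta$ from $c$ and $B(f^s(z_0),\delta)\subset B(f^s(z_0),\sigma)\subset V'$. After multiplication, the $\lambda$-powers telescope into $\lambda^{-s}$ up to a multiplicative constant depending only on $\lambda$ and $\sigma$.

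The hard part will be to verify that the residual algebraic factors assemble into the claimed quantity $\min\{1,\inf_{i=0}^{s}|f^i(z_0)-c|\}/|z_0-c|$ when $z_0\ne c$, or into a uniform constant when $z_0=c$. The naive product of ratios $|f^{t_j}(z_0)-c|/\max\{\delta,|f^{t_{j-1}}(z_0)-c|\}$ can shrink to zero if the orbit makes many very close returns to $c$, so a more refined estimate is needed. I would overcome this by choosing $\delta$ adaptively as (a constant multiple of) the orbit's infimum distance to $c$, so that the deepest return sits on the boundary of $\overline{B(c,\delta)}$ and each algebraic factor from Lemma~\ref{lem:return} is bounded below; the trade-off is a $\delta$-dependence in the constant $\kappa(\delta,\lambda)$ from Lemma~\ref{lem:awayfrom0}, which must then be tracked against the $\min\{1,\inf_i|f^i(z_0)-c|\}$-factor on the right-hand side of the statement. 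Alternatively one can use case~(b) of Lemma~\ref{lem:return}, which carries no $\lambda^{-n}$ factor, to absorb the algebraic losses between consecutive deep returns. Executing this bookkeeping cleanly, together with the reduction to non-periodic $c$, is the main technical obstacle.
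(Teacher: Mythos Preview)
Your overall plan—chain Lemma~\ref{lem:return} between successive close passes to $c$ and finish with Lemma~\ref{lem:awayfrom0}—is exactly the paper's, and you have correctly isolated the real difficulty: with a \emph{fixed} $\delta$ the telescoping product of ratios $|f^{t_j}(z_0)-c|/\delta$ is uncontrolled. But neither of your proposed fixes resolves it. If you set $\delta$ equal to the orbit infimum $\epsilon_0=\inf_i|f^i(z_0)-c|$, then the final segment must be handled by Lemma~\ref{lem:awayfrom0} with that same $\delta=\epsilon_0$, and the constant $\kappa(\epsilon_0,\lambda)$ there degenerates as $\epsilon_0\to 0$ in a way that is \emph{not} compensated by the factor $\epsilon_0$ on the right-hand side (inspect the proof: $\kappa=\delta K(\delta,\lambda)^{-m_0(\delta,\lambda)}/4$, and both $K$ and $m_0$ blow up). Your alternative of leaning on case~(b) of Lemma~\ref{lem:return} is not workable either, since that case requires $|z-c|>\delta$ and a univalent pull-back, neither of which you can guarantee at a deep return.

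The paper's decomposition avoids this trap by separating the two lemmas' roles. Fix $\delta_0=\delta_0(\lambda)\le\sigma$ once and for all, and let $\tilde\kappa=\kappa(\delta_0,\lambda)$ be the (now fixed) constant from Lemma~\ref{lem:awayfrom0}. Let $s_0$ be the \emph{first} time the minimum $\epsilon_0$ is attained, and $s_{\max}\in[s_0,s]$ the \emph{last} time the orbit is within $\delta_0$ of $c$. On $[s_{\max},s]$ apply Lemma~\ref{lem:awayfrom0} with the fixed $\delta_0$. On $[s_0,s_{\max}]$ build an \emph{increasing ladder of minima}: set $\epsilon_1=\inf_{s_0<i\le s_{\max}}|f^i(z_0)-c|$, let $s_1$ be the first time it is attained, then $\epsilon_2=\inf_{s_1<i\le s_{\max}}|f^i(z_0)-c|$, etc. At step $j$ apply Lemma~\ref{lem:return} with $z=f^{s_{j-1}}(z_0)$ and $\delta=\epsilon_j$; since $|z-c|=\epsilon_{j-1}\le\epsilon_j=\delta$ and $|f^{s_j-s_{j-1}}(z)-c|=\epsilon_j$, the algebraic ratio is exactly $1$ and each block contributes only $\lambda^{-(s_j-s_{j-1})}$. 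Finally, on $[0,s_0]$ apply Lemma~\ref{lem:return} with $\delta=\epsilon_0$ to pick up the single factor $\epsilon_0/|z_0-c|$. The point is that Lemma~\ref{lem:return} carries \emph{no} $\delta$-dependent constant, so varying $\delta$ there is free; Lemma~\ref{lem:awayfrom0} is invoked only once, with the pre-chosen $\delta_0$.

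(A minor remark: the statement already assumes $c$ is non-periodic, so your preliminary reduction is unnecessary.)
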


\begin{proof}
Fix $\lambda>1$ and $\sigma>0$, let
$\delta_0=\delta_0(\lambda)\in (0,\sigma]$ be given by Lemma~\ref{lem:return} and let
$\tilde\kappa=\kappa(\delta_0(\lambda), \lambda)$ be given by
Lemma~\ref{lem:awayfrom0}.
Define $\epsilon_0=\inf_{i=0}^s |f^i(z_0)-c|$.
If $\epsilon_0> \delta_0$, we are done by
Lemma~\ref{lem:awayfrom0} with $C=\tilde\kappa$.
Otherwise let $s_0\in \{0,...,s\}$ be minimal such that $|f^{s_0}(z_0)-c|=\eps_0$
and $s_{max}\in \{s_0,...,s\}$ be maximal such that $|f^{s_{max}}(z_0)-c|\le \delta_0$.
We show that
\begin{equation}\label{s0s}
Df^{s-s_0}(f^{s_0}(z_0))|\ge \frac{\tilde\kappa}{M} \lambda^{-(s-s_0)}.
\end{equation}
Indeed, if $s_{\max}=s_0$, we are done by Lemma~\ref{lem:awayfrom0}, where we put $z=f^{s_0}(z_0)$ and $\delta=\delta_0$.
Let $s_0<s_{max}$.
Define $\eps_1=\inf_{i=s_0+1}^{s_{max}} |f^i(z_0)-c|$ and let $s_1\in \{s_0+1,...,s_{max}\}$ be minimal
such that $|f^{s_1}(z_0)-c|=\eps_1$. If $s_1=s_{max}$ then we stop. Otherwise, define
$\eps_2=\inf_{i=s_1+1}^{s_{max}} |f^i(z_0)-c|$ and let $s_2\in \{s_1+1, \ldots, s_{max}\}$
be minimal such that $|f^{s_2}(z_0)-c|=\eps_2$. Repeating that argument,
we obtain a sequence of positive numbers $0<\eps_1\le \eps_2\le \cdots\eps_k\le \delta_0$
and a sequence of integers $0\le s_0<s_1<s_2<\cdots<s_k=s_{max}$.

Applying Lemma~\ref{lem:return} to $z=f^{s_0}(z_0)$, $\delta=\eps_1$ and $n=s_1-s_0$, we obtain
$$|Df^{s_1-s_0}(f^{s_0}(z_0))|\ge \lambda^{-(s_1-s_0)}.$$
For each $i=2,3,\ldots, k$, applying Lemma~\ref{lem:return} to $z=f^{s_{i-1}}(z_0)$, $\delta=\eps_{i}$ and $n=s_i-s_{i-1}$, we obtain
$$|Df^{s_i-s_{i-1}}(f^{s_{i-1}}(z_0))|\ge \lambda^{-(s_i-s_{i-1})}.$$
Therefore
$$|Df^{s_{max}-s_0}(f^{s_0}(z_0))|=\prod_{i=1}^k |Df^{s_i-s_{i-1}}(f^{s_{i-1}}(f^{s_0}(z_0)))|\ge \lambda^{-(s_{max}-s_0)}.$$
If $s_{max}<s$, we can further apply Lemma~\ref{lem:awayfrom0} with $z=f^{s_{max}}(z_0)$, $\delta=\delta_0$ and $n=s-s_{max}$:
$$|Df^{s-s_{max}}(f^{s_{max}}(z_0))|\ge \frac{\tilde\kappa}{M} \lambda^{-(s-s_{max})}.$$
Thus~(\ref{s0s}) follows:
%\begin{equation}\label{smaxs}
$$|Df^{s-s_0}(f^{s_0}(z_0))|\ge \lambda^{-(s_{max}-s_0)}\frac{\tilde\kappa}{M} \lambda^{-(s-s_{max})}=
\frac{\tilde\kappa}{M}\lambda^{-(s-s_0)}.$$
%\end{equation}
This proves the Lemma if $s_0=0$ (including the case $z_0=c$).

If $s_0>0$, we apply Lemma~\ref{lem:return} for $z=z_0$, $\delta=\epsilon_0$
and $n=s_0$:
$$|Df^{s_0}(z_0)|\ge \lambda^{-s_0} \frac{\epsilon_0}{12 \max\{\epsilon_0, |z_0-c|\}}=\lambda^{-s_0}
\frac{\epsilon_0}{12 |z_0-c|}.$$
%We split the iterates of $z_0$ up to $s_0$ into parts between
%closest returns to $c$. Applying Lemma~\ref{lem:return} to each such a part and multiplying the
%corresponding inequalities, one obtains
%$$|Df^{s_0}(f(z_0))|\ge \frac{\inf_{i=0}^s |f^i(z_0)-c|}{\max\{\delta_0, |z_0-c|\}}\lambda^{-s_0}.$$
Combining this with~(\ref{s0s}) we obtain finally:
$$|Df^s(z_0)|\ge C \frac{\inf_{i=0}^s |f^i(z_0)-c|}{|z_0-c|} \lambda^{-s},$$
where $C=\tilde\kappa/(12M)$.
\end{proof}

Theorem~\ref{thm:main} follows at once from Lemma~\ref{lem:pass}.
\begin{proof}[Proof of Theorem~\ref{thm:main}]
We may certainly assume that $c$ is not periodic. Fix $\lambda>1$.
Applying Lemma~\ref{lem:pass} with $z_0=c$, we find $C>0$ such that for each $s\ge 1$,
$$|Df^s(c)|\ge C \lambda^{-s}.$$
Hence, $\chi_{-}(f, c)\ge -\log\lambda$ for every $\lambda>1$
\end{proof}

Let us formulate another consequence of Lemma~\ref{lem:pass}.

\begin{definition} Let $z\notin AB(f)$ such that the forward orbit of $z$ is well-defined.
We call $z$
{\em (exponentially) slowly recurrent} if
for any $\alpha>0$, $|f^n(z)-c|\ge e^{-\alpha n}$ holds for every large $n$.
\end{definition}

As in the proof of Theorem~\ref{thm:main}, Lemma~\ref{lem:pass} implies:
\begin{lemma}\label{lem:slexp}
If $z$ is a slowly recurrent point and there exists $\delta>0$
%\marginpar{change}
such that $B(f^n(z), \delta)\subset V'$ for every $n$,
then $\chi_{-}(f,z)\ge 0$.
\end{lemma}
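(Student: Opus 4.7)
The plan is to apply Lemma~\ref{lem:pass} directly along the forward orbit of $z$ and absorb the distance term it produces using the slow recurrence hypothesis.

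First, I would dispose of the case when $c$ is periodic: since $z\notin AB(f)$ and the orbit of $z$ does not approach $c$ faster than exponentially of any prescribed rate, the periodic case can be handled by the same sort of reduction used at the very start of the proof of Theorem~\ref{thm:main} (if $c$ is attracting periodic then $AB(f)\ni c$ and slow recurrence of $z$ is incompatible with $z\notin AB(f)$; if $c$ is neutral or repelling periodic, a direct Koebe argument along the periodic cycle gives $\chi_-(f,z)\ge 0$). From now on assume $c$ is not periodic, so that Lemma~\ref{lem:pass} is applicable.

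Fix arbitrary $\lambda>1$ and $\alpha>0$. The hypothesis $B(f^s(z),\delta)\subset V'$ for every $s$ allows us to invoke Lemma~\ref{lem:pass} with $z_0=z$ and $\sigma=\delta$. Assuming $z\ne c$ (otherwise the simpler inequality in Lemma~\ref{lem:pass} applies directly) we obtain a constant $C=C(\lambda,\delta)>0$ such that for every $s\ge 1$,
\[
|Df^s(z)|\ge \frac{C}{|z-c|}\,\min\Bigl\{1,\inf_{0\le i\le s}|f^i(z)-c|\Bigr\}\,\lambda^{-s}.
\]
By the slow recurrence of $z$ applied with the chosen $\alpha$, there is $N=N(\alpha)$ with $|f^i(z)-c|\ge e^{-\alpha i}\ge e^{-\alpha s}$ for $N\le i\le s$, while for $0\le i<N$ the distance $|f^i(z)-c|$ is bounded below by some positive constant $\eta=\eta(\alpha,z)$ (no orbit point equals $c$, else slow recurrence fails at step $i$). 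Hence for every sufficiently large $s$,
\[
\inf_{0\le i\le s}|f^i(z)-c|\ge \eta\,e^{-\alpha s}.
\]

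Plugging this back in produces $|Df^s(z)|\ge C'\,e^{-\alpha s}\,\lambda^{-s}$ for all large $s$, with $C'$ independent of $s$. Taking $(1/s)\log$ and passing to the liminf yields
\[
\chi_-(f,z)=\liminf_{s\to\infty}\frac{1}{s}\log|Df^s(z)|\ge -\alpha-\log\lambda,
\]
and since $\alpha>0$ and $\lambda>1$ are arbitrary we let $\alpha\to 0^+$ and $\lambda\to 1^+$ to conclude $\chi_-(f,z)\ge 0$. The only place requiring care is the $c$-periodic reduction, which Lemma~\ref{lem:pass} formally excludes but which is by now standard; the main body of the argument is an essentially mechanical application of Lemma~\ref{lem:pass} to the orbit of $z$.
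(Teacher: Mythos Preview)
Your argument is correct and matches the paper's own (unwritten) proof: the paper simply says that Lemma~\ref{lem:pass} implies this exactly as in the proof of Theorem~\ref{thm:main}, and you have filled in the routine step of bounding $\inf_i|f^i(z)-c|$ via slow recurrence and letting $\alpha\to 0$, $\lambda\to 1$. One minor slip: your parenthetical that ``no orbit point equals $c$, else slow recurrence fails at step $i$'' is not valid, since slow recurrence only constrains large $n$; however if $f^{i_0}(z)=c$ for some $i_0$ you can simply invoke the $z_0=c$ clause of Lemma~\ref{lem:pass} on the tail of the orbit (and the $c$-periodic reduction you flag is likewise left implicit in the paper).
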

\section{Expansion along the orbit}\label{up}
Let us fix a polynomial $g$ of degree at least $2$.
%\marginpar{change}
%\marginpar{This and the next sections are new}
Theorem~\ref{thm:up} of the Introducton is an immediate corollary
of the following.
\begin{theorem}\label{thm:positiv}
For every $\lambda>1$ and $\epsilon_0>0$ there exist $\rho>0$,
$N, \tilde n\in \Z^+$ and
$\alpha>0$ (which depend on $g$ and $\lambda$, $\epsilon_0$ only) as follows.
Let $z\in J(g)$ and $m\in \Z^+$. Assume
\begin{equation}\label{chi+}
\frac{1}{m}\log |Dg^m(z)|>\epsilon_0+\log \lambda.
\end{equation}
Then there exists a
subset $H_m$ of the set $\{1,2,...,m\}$ such that the following hold:
\begin{enumerate}
\item[(a)] $\#\ H_m\ge \alpha m$,
\item[(b)] for every $n\in H_m$,
if $V_n$ is a connected component of $g^{-n}(B(g^{n}(z),\rho))$
which contains the point $z$ then
the map $g^{n}: V_n\to B(g^{n}(z),\rho)$ is at most $N$-critical, that is,
$\#\{0\le k < n:
g^k(V_n)\cap Crit \not= \emptyset \} \le N$,
where $Crit$ is the set of critical points of $g$.

\item[(c)] $V_n\subset B(z, \lambda^{-n}\rho)$ whenever $n\in H_m$ and $n\ge \tilde n$.
\end{enumerate}
\end{theorem}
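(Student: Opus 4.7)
The plan is to combine the Pliss lemma, which extracts a positive density set of expanding ``hyperbolic times'' from the growth hypothesis, with a telescopic shrinking argument in the spirit of Przytycki controlling the criticality and diameter of the pullbacks at those times. Set $a_k := \log|Dg(g^k(z))|$; since $g$ is a polynomial and the orbit of $z$ stays in a fixed compact subset of $\C$, we have $a_k \le \log L$ for some $L = L(g)$. The hypothesis (\ref{chi+}) reads $\sum_{k=0}^{m-1} a_k > m(\epsilon_0 + \log\lambda)$, so the Pliss lemma produces a constant $\alpha = \alpha(\lambda,\epsilon_0,L) > 0$ and a set $H_m \subset \{1,\dots,m\}$ with $\#H_m \ge \alpha m$ (which gives (a)) consisting of ``hyperbolic times'' $n$ for which
\[
|Dg^{n-k}(g^k(z))| \ge \lambda^{n-k}\, e^{\epsilon_0(n-k)/2} \qquad\text{for all } 0\le k<n.
\]

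Fix $\rho > 0$ to be determined, and for $n \in H_m$ let $V_{n,k}$ denote the component of $g^{-(n-k)}(B(g^n(z),\rho))$ that contains $g^k(z)$; write $V_n := V_{n,0}$. The restriction $g : V_{n,k} \to V_{n,k+1}$ is a proper branched cover, and the criticality of $g^n : V_n \to B(g^n(z),\rho)$ equals the total number of pairs $(k,c)$, $0\le k<n$ and $c$ a critical point of $g$, for which $c \in V_{n,k}$. Since $g$ has at most $\deg g - 1$ critical points, it suffices to bound how often each such $c$ appears in the nested sequence $V_{n,0},V_{n,1},\dots,V_{n,n-1}$.

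The main obstacle is a circularity: bounded criticality is needed to apply the finite-degree Koebe distortion theorem and conclude that the pullbacks shrink exponentially, while shrinkage is precisely what will bound the criticality. My plan is to run an induction on $k$ backwards from $k=n$. Suppose the criticality of $g^{n-k-1} : V_{n,k+1} \to B(g^n(z),\rho)$ is at most some provisional $N_0$; then Koebe and the hyperbolic-time estimate yield $\diam(V_{n,k+1}) \le C_{N_0}\, \lambda^{-(n-k-1)}\rho$. Choosing $\rho$ small compared to the pairwise distances among the finitely many critical points of $g$, the event $c \in V_{n,k}$ forces $|g^k(z) - c| \le C'_{N_0}\, \lambda^{-(n-k)}\rho$. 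Summing the resulting geometric series over $k$ and over $c \in \mathrm{Crit}(g)$ bounds the total number of encounters by a constant $N = N(g,\lambda,\epsilon_0)$ uniform in $n$ and $z$; absorbing $N_0$ into $N$ closes the induction and establishes (b).

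Once criticality is uniformly bounded by $N$, a final application of Koebe at $k=0$ yields $\diam(V_n) \le C_N\, |Dg^n(z)|^{-1}\rho \le C_N\, \lambda^{-n} e^{-\epsilon_0 n/2}\rho$. Choosing $\tilde n = \tilde n(N,\lambda,\epsilon_0)$ so that $C_N e^{-\epsilon_0 n/2} \le 1$ for every $n \ge \tilde n$ then gives $V_n \subset B(z,\lambda^{-n}\rho)$, which is (c).
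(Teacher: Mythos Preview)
Your Pliss step is fine and matches the paper, but the heart of your argument --- the ``geometric series'' bound on the number of critical encounters --- does not work as stated. From $c\in V_{n,k}$ you correctly deduce $|g^k(z)-c|\le C'_{N_0}\lambda^{-(n-k)}\rho$, and the right-hand sides do form a summable geometric series in $k$. But a bound on the \emph{sum} of these radii says nothing about the \emph{number} of indices $k$ satisfying the inequality: that number depends on how often the orbit $\{g^k(z)\}$ visits shrinking neighbourhoods of $c$, and nothing in your hypotheses controls this recurrence. So the sentence ``summing the resulting geometric series \ldots\ bounds the total number of encounters by a constant $N$'' is a non sequitur. The inductive bootstrap you set up is therefore genuinely circular: $C_{N_0}$ grows with $N_0$, each new encounter increments $N_0$, and without an independent bound on the number of encounters, the final criticality could be of order $n$, not $O(1)$.

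The paper closes this gap with two separate ingredients you are missing. First, a \emph{telescope lemma} (Przytycki; see also \cite{PRS}, Lemma~2.3) gives the diameter estimate $W^{n-i}_\delta(g^i(z))\subset B(g^i(z),C\delta\lambda^{-(n-i)})$ directly from the hyperbolic-time condition, \emph{without} any assumption on criticality; this already yields (c) and breaks the circularity you identified. Second, for (b) the paper invokes the estimate $\sum_{k<m}'(-\log\dist(g^k(z),\Crit'))\le C_g m$ of Denker--Przytycki--Urba\'nski, which is exactly the quantitative recurrence bound you would need. Combined with a ``shadow'' counting (pigeonhole) argument, this shows that \emph{most}, though not necessarily all, integers $n\le m$ have at most $N$ critical encounters in their pullback; $H_m$ is then defined as the intersection of the Pliss hyperbolic times with this good set. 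In particular, the paper does \emph{not} claim bounded criticality at every hyperbolic time, and your attempt to prove that stronger statement is where the argument breaks down.
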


This result has already been referred to in~\cite{GPR}.
\begin{remark}\label{ratexp}
Theorem~\ref{thm:positiv} as well as its proof (see below) hold for rational functions as well
(derivatives are then
taken in the spherical distance).
\end{remark}

We need some preparations for the proof.
\begin{definition} We say that $m$ is a {\em $\lambda$-hyperbolic time} for $z$
if $$|Dg^{m-i}(g^i(z))|\ge \lambda^{m-i}$$ holds for each $i=0,1,\ldots, m-1$.
\end{definition}
Let $W_\delta^k(z)$ , $k>0$, denote the connected component of $g^{-k}(B(g^k(z),\delta))$ which contains $z$.
%\marginpar{change}
The next Lemma is known as the ``telescope lemma'' of~\cite{P0} (see also~\cite{PRS}, Lemma 2.3
for a simplified method and~\cite{GPR} for $C^1$ multimodal interval maps):
\begin{lemma}\label{lem:hyp}
Let $\lambda>1$ and $\epsilon>0$.
There exist $C=C(\lambda,\epsilon)$ and
$\delta_0=\delta_0(\lambda,\epsilon)>0$ as follows.
Assume that $s$ is a
$\lambda e^{\epsilon}$-hyperbolic time for $z\in J(g)$. Then, for
every $\delta\in (0, \delta_0]$ and every $i=0,...,s-1$,
\begin{equation}\label{backall}
W_\delta^{s-i}(g^i(z))\subset B(g^i(z), C \delta \lambda^{-(s-i)}).
\end{equation}
\end{lemma}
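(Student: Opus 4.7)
Set $\lambda':=\lambda e^{\epsilon}$, so the hypothesis reads $|Dg^{s-k}(g^k(z))|\ge (\lambda')^{s-k}$ for $k=0,\dots,s-1$. Write $V_k:=W_\delta^{s-k}(g^k(z))$ and $V_k^{*}:=W_{2\delta}^{s-k}(g^k(z))$. The plan is a downward induction on $k$ (from $k=s$ down to $k=0$), proving for some $\delta_0=\delta_0(g,\lambda,\epsilon)$ and any $\delta\le \delta_0$ the joint statements:
\begin{itemize}
\item[(A)] $g^{s-k}\colon V_k^{*}\to B(g^s(z),2\delta)$ is univalent;
\item[(B)] $V_k\subset B\bigl(g^k(z),\,C\delta(\lambda')^{-(s-k)}\bigr)$.
\end{itemize}
Given (A), (B) follows at once from the Koebe distortion theorem applied to the univalent inverse branch $(g^{s-k})^{-1}\colon B(g^s(z),2\delta)\to V_k^{*}$, restricted to the concentric half-radius disk $B(g^s(z),\delta)$: the pullback $V_k$ has diameter at most a universal constant $K$ times $\delta/|Dg^{s-k}(g^k(z))|\le \delta(\lambda')^{-(s-k)}$. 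Since $(\lambda')^{-(s-k)}=e^{-\epsilon(s-k)}\lambda^{-(s-k)}\le \lambda^{-(s-k)}$, this yields \eqref{backall} with $C=K$.

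\textbf{The inductive step.} The base case $k=s$ is trivial, as $V_s^{*}=B(g^s(z),2\delta)$. Assuming (A) at step $k+1$ (which yields (B) as above, and, by running the same Koebe argument at a slightly enlarged outer radius, e.g.\ $4\delta$ in place of $2\delta$, also produces the companion bound $\diam V_{k+1}^{*}\le C'\delta(\lambda')^{-(s-k-1)}$), promoting (A) to step $k$ amounts, via the simple connectedness of $V_{k+1}^{*}$, to verifying that the preimage component $V_k^{*}$ of $g^k(z)$ contains no critical point of $g$. The already-established diameter bound on $V_{k+1}^{*}$ provides the geometric room that decays geometrically in $s-k-1$.

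\textbf{Main obstacle.} The heart of the argument is to rule out $V_k^{*}\cap \Crit(g)\neq\emptyset$ at every step, which is where the hyperbolic-time hypothesis plays its essential role. Arguing by contradiction, suppose $c\in \Crit(g)\cap V_k^{*}$ has local degree $m$ at $c$. The local normal form $g(w)-g(c)\asymp (w-c)^m$, combined with $g(c)\in V_{k+1}^{*}$, the inductive bound on $\diam V_{k+1}^{*}$, and the univalent Koebe control of $g^{s-k-1}$ on $V_{k+1}^{*}$, forces
$$|g^k(z)-c|\lesssim \bigl(\delta(\lambda')^{-(s-k-1)}\bigr)^{1/m}, \qquad |Dg(g^k(z))|\lesssim |g^k(z)-c|^{m-1},$$
so that $|Dg(g^k(z))|$ is a small power of $\delta$ times a decaying factor. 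Inserting this into the chain-rule factorisation $|Dg^{s-k}(g^k(z))|=|Dg(g^k(z))|\cdot|Dg^{s-k-1}(g^{k+1}(z))|$, together with a compactness-based upper bound on $|Dg^{s-k-1}|$ over a compact region containing the forward orbit of $z$, yields an upper bound on $|Dg^{s-k}(g^k(z))|$ that, for $\delta_0$ small enough in terms of $g$, $\lambda$, $\epsilon$, conflicts with the hyperbolic-time lower bound $(\lambda')^{s-k}$. This is the standard telescope estimate of \cite{P0,PRS,GPR}, and closes the induction, completing the proof.
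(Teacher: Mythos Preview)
The paper itself does not supply a proof of this lemma; it cites it as the ``telescope lemma'' of \cite{P0} (with the simplified version in \cite{PRS}, Lemma~2.3, and \cite{GPR}). Your proposal, however, tries to establish something strictly stronger than what the lemma asserts, namely full \emph{univalence} of $g^{s-k}$ on the enlarged pullback $V_k^{*}$. That stronger statement is not expected to hold in general---indeed the paper's very next passage (Claim~2) and Theorem~\ref{thm:positiv}(b) are written precisely to accommodate the case $W_\delta^{s-k}(g^k(z))\cap\Crit'\neq\emptyset$, asserting only that the branching is \emph{bounded}, not absent---and your argument for (A) has a genuine gap.

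The failure sits in your ``Main obstacle'' paragraph. Granting that $c\in V_k^{*}$ forces $|Dg(g^k(z))|\lesssim\bigl(\delta(\lambda')^{-(s-k-1)}\bigr)^{(m-1)/m}$, the only upper bound you invoke on the remaining factor is the compactness estimate $|Dg^{s-k-1}(g^{k+1}(z))|\le M^{s-k-1}$ with $M:=\sup_{J(g)}|Dg|$. The resulting ``contradiction'' reads
\[
(\lambda')^{s-k}\ \lesssim\ \delta^{(m-1)/m}(\lambda')^{-(s-k-1)(m-1)/m}M^{s-k-1},
\]
equivalently $\bigl((\lambda')^{(2m-1)/m}/M\bigr)^{s-k-1}\lesssim C\,\delta^{(m-1)/m}$. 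Since $\lambda'=\lambda e^{\epsilon}$ may be arbitrarily close to $1$ while $M>1$ is fixed by $g$, the base on the left is typically $<1$; then as $s-k\to\infty$ the left side tends to $0$ and the inequality is \emph{satisfied} for every $\delta>0$. No choice of $\delta_0$ produces a contradiction at deep levels of the induction, so (A) cannot be closed this way. (There is also a secondary gap: bounding $\diam V_{k+1}^{*}$ via Koebe requires univalence on a pullback strictly larger than $V_{k+1}^{*}$, which your induction hypothesis does not provide.)

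The actual argument in \cite{P0,PRS} does not attempt univalence. It controls the diameters of the (possibly branched) pullbacks step by step and uses the built--in slack $e^{\epsilon}$ in the hypothesis $\lambda e^{\epsilon}$, together with the freedom $\delta_0=\delta_0(\lambda,\epsilon)$, to absorb the accumulated one--step distortion; see those references for the precise mechanism.
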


\

%Denote by $Crit'$ the set of critical points of $g$ which lie in the Julia set $J(g)$.
Recall the definition of ``shadow''~\cite{PR}.
Fix $z\in J(g)$. For $n\in \Z^+$, set $\varphi(n)=-\log d(g^n(z), Crit)$.
Multiplying the metric $d$ by a constant we can assume that $\varphi\ge 0$.
%As $\liminf_{n\to \infty}n^{-1}\log|Df^n(c)|\ge 0$,
By~\cite{DPU}, there exists $C_g>0$, which depends only on $g$
such that
\begin{equation}\label{psibound}
\sum_{k=0}^{n-1}{\bf '} \ \varphi(k)\le C_g n, \ \ \ n=1,2,...
\end{equation}
where $\sum {\bf '}$ denotes the summation over all but at most $M=\#\ Crit$ indexes
(at most one closest approach, i.e., the biggest $\varphi(k)$, per each $c\in Crit$).
Given $K>0$, define a ``shadow'' $S(j, K)$ of
$j\in \Z^+$ to be the following interval of the real line:
$$S(j, K)=(j, j+K \varphi(j)].$$
For any $N\in \Z^+$, let $A(N, K)$ be the set of all
$n\in \Z^+$ such that there are at most $N$ integers $j$ so that
$n\in S(j, K)$.
Let us denote by $l(S)$ the lenght of an interval $S\subset {\bf R}$.
%and by $|A|$ the number of integers in a subset $A$ of ${\bf N}$.

{\bf Claim 1.}
For every $m\in \Z^+$,
$$\frac{\#\ \{A(N, K)\cap \{1,..., m\}\}}{m}\ge 1-\frac{C_g K}{N-M+1}.$$

\begin{proof}[Proof of Claim 1]
Let $1\le n_1<...<n_r\le m$ be all $n$ with the property that
$n$ lies in at least $N+1$ ``shadows''. Then obviously
$\sum_{j=0}^{m-1}{\bf '} \ l(S(j, K))\ge (N-M+1) r$.
On the other hand, by~(\ref{psibound}),
$${\sum_{j=0}^{m-1}}{\bf '} \ l(S(j, K))=K \sum_{j=0}^{m-1}{\bf '} \
\varphi(j)\le K C_g m.$$
Therefore, for $N$ large enough, $r\le \frac{C_g K}{N-M+1} m$.
\end{proof}

Given $r>0$, we denote by $G(N, r)$ the set of all
$n\in {\bf N}$ such that the map
$g^n: W_r^n(z)\to B(g^n(z), r)$ is at most $N$-critical.
Then we have

{\bf Claim 2.}
Suppose that $n$ is an $\lambda e^\epsilon$-hyperbolic time for $z$, for some $\epsilon>0$.
Set $K_0=1/\log\lambda$. There  exists $\tilde\delta=\tilde\delta (\lambda, \epsilon)$
such that for every $\delta\in (0, \tilde\delta]$ and $N\in \Z^+$,
if $n\in A(N, K_0)$, then $n\in G(N, \delta)$.
\begin{proof}[Proof of Claim 2.]
Let $C=C(\lambda, \epsilon)$ and $\delta_0=\delta_0(\lambda, \epsilon)$
be taken from Lemma~\ref{lem:hyp}. Define $\tilde\delta=\min\{\delta_0, 1/C\}$.
Let $\delta\le \tilde\delta$.
If $1\le j<n$ is such that $W_\delta^j(g^{n-j}(z))\cap Crit\not=\emptyset$, then
$d(g^{n-j}(z), Crit)\le \diam(W_\delta^j(g^{n-j}(z)))\le C \delta \lambda^{-j}$.
Hence,
$$\varphi(n-j)\ge j\log\lambda + \log\frac{1}{C \delta}\ge
\frac{j}{K_0}.$$
It means that $n\in S(n-j, K_0)$.
Now, assume that $g^n: W_\delta^n(z)\to B(g^n(z), \delta)$ is {\em at least}
$N+1$-critical.
By the preceding consideration,
$n$ belongs to at least $N+1$ different
``shadows'' $S(n-j, K_0)$. Therefore, $n\notin A(N, K_0)$.
Thus $n\in A(N, K_0)$ implies $n\in G(N, \delta)$.
\end{proof}

The last ingredient of the proof is the Pliss Lemma~\cite{Pliss}:
\begin{lemma}\label{pliss}
Let $0<b_1<b_2\le B$ and $\theta=(b_2-b_1)/(B-b_1)$. Given real numbers
$a_1,...,a_r$ satisfying $\sum_{j=1}^r a_j\ge b_2 r$ and $a_j\le B$ for all
$1\le j\le r$, there are $l>\theta r$ and $1<n_1<...<n_l\le r$ such that
%\marginpar{minor change}
$\sum_{i=n+1}^{n_j} a_i\ge b_1(n_j-n)$ for each $0\le n<n_j$, $j=1,...,l$.
\end{lemma}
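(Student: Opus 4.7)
The plan is to recast the conclusion as a statement about running maxima of a suitable sequence of partial sums, and then extract the record-breaking indices by a simple telescoping estimate.

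First, I set $S_n = \sum_{j=1}^n (a_j - b_1)$ for $0 \le n \le r$, so that $S_0 = 0$ and the hypothesis $\sum_{j=1}^r a_j \ge b_2 r$ gives $S_r \ge (b_2 - b_1) r$. Note that the Pliss inequality $\sum_{i=n+1}^{n_j} a_i \ge b_1(n_j - n)$ for every $0 \le n < n_j$ is exactly $S_{n_j} \ge S_n$ for all such $n$. So the indices I am looking for are precisely those at which $S$ attains its running maximum.

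Next I introduce $M_n = \max_{0 \le k \le n} S_k$ and call $n \in \{1,\ldots,r\}$ a (strict) record if $M_n > M_{n-1}$. At a record, $S_n = M_n > M_{n-1} \ge S_k$ for every $k < n$, so every strict record is automatically a valid $n_j$. The key inequality is that along a record the running maximum cannot jump too much: using $M_{n-1} \ge S_{n-1}$ and $a_n \le B$,
\[
M_n - M_{n-1} \;=\; S_n - M_{n-1} \;\le\; S_n - S_{n-1} \;=\; a_n - b_1 \;\le\; B - b_1,
\]
while at non-record indices $M_n = M_{n-1}$.

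Telescoping from $n=1$ to $n=r$, if $l$ denotes the number of strict records, the non-record terms drop out and
\[
(b_2 - b_1)\, r \;\le\; S_r \;\le\; M_r \;=\; \sum_{n\text{ record}} (M_n - M_{n-1}) \;\le\; l\,(B - b_1),
\]
which yields $l \ge \theta r$. The only real technical subtlety, and the one I expect to be the (mild) main nuisance, is upgrading this to the strict inequality $l > \theta r$ together with the cosmetic side condition $n_1 > 1$ in the statement. I would deal with both by running the same argument with $b_1$ replaced by $b_1 - \eta$ for a sufficiently small $\eta > 0$: the records with respect to this perturbed threshold then satisfy a uniformly stronger Pliss inequality, which leaves enough slack both to discard at most one unwanted index (in particular $n = 1$) and to convert $\ge$ into strict $>$ in the count.
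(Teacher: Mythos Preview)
The paper does not prove this lemma; it merely states it with a citation to Pliss~\cite{Pliss}. Your running-maximum argument is the standard proof and is correct: setting $S_n=\sum_{j\le n}(a_j-b_1)$ and counting strict records of the sequence $(S_n)$ yields at least $\theta r$ indices $n_j$ with $S_{n_j}\ge S_n$ for all $0\le n<n_j$, which is exactly the Pliss inequality.

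The only place your write-up goes astray is the ``cosmetic'' upgrade at the end. Replacing $b_1$ by $b_1-\eta$ does \emph{not} make the resulting records satisfy a stronger Pliss inequality; it produces more records, each satisfying the \emph{weaker} bound $\sum_{i=n+1}^{n_j}a_i\ge (b_1-\eta)(n_j-n)$. So the perturbation runs in the wrong direction for the purpose you describe. More to the point, no fix is possible, because the lemma as literally printed (strict $l>\theta r$ together with $n_1>1$) is false. Take $b_1=0$, $b_2=1$, $B=2$, $r=2$, $a_1=2$, $a_2=0$: then $\theta r=1$, the only admissible index with $n_j>1$ is $n_j=2$, and $l=1\not>\theta r$. (At the other boundary $b_2=B$ one is forced to $a_j\equiv B$, giving $l=r=\theta r$ even without the $n_1>1$ restriction.)

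These are harmless misprints in the paper's statement: the customary formulation is $l\ge\theta r$ with $1\le n_1$, which is precisely what your telescoping delivers, and that is all the paper actually uses when it invokes the lemma to produce a positive density of hyperbolic times.
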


\begin{proof}[Proof of Theorem~\ref{thm:positiv}]
Let $z\in J(g)$, $m\in \Z^+$ and
$$\frac{1}{m}\log|Dg^{m}(z)|>\epsilon_0+ \log\lambda.$$
Let $\epsilon=\epsilon_0/8$. Consider the set
$$\tilde H_m=\{n\in \{1,...,m\}: n \mbox{ is a }
\lambda e^{4\epsilon} -\mbox{hyperbolic time for }
z\}.$$
By the Pliss Lemma~\ref{pliss} (for $a_j=\log
g'(g^{j-1}(z))|$), there exists $\theta=\theta(\lambda, \epsilon)$
such that
\begin{equation}\label{hmk}
\frac{\#\ \tilde H_m}{m}>\theta, \ m\ge 1.
\end{equation}
Let $K_0=1/\log(\lambda e^{2\epsilon})$ and let
$\tilde\delta=\tilde\delta(\lambda, 2\epsilon)$ be the constant from Claim 2.
We define
$$\rho=\tilde\delta, \ N=[2 C_f K_0/\theta]+M \mbox{ and }
\alpha=\theta/2.$$
Consider the corresponding sets $A(N, K_0)$ and $G(N, \rho)$.
By Claim 1,
\begin{equation}\label{cl1}
\frac{\#\ \{A(N, K_0)\cap \{1,..., m\}\}}{m}\ge 1-\frac{C_g K_0}{N-M+1}>1-\frac{\theta}{2}
\end{equation}
while by Claim 2,
\begin{equation}\label{cl2}
A(N, K_0)\cap \tilde H_m\subset G(N, \rho).
\end{equation}
Then~(\ref{hmk}) and~(\ref{cl1}) give us
%\marginpar{change}
\begin{equation}\label{cl3}
\#\ \{A(N, K_0)\cap \tilde H_m\}>\frac{\theta}{2} m.
\end{equation}
Hence, by~(\ref{cl2}) we must have:
\begin{equation}\label{g}
\frac{\#\ \{G(N, \rho)\cap \tilde H_m\}}{m}>\alpha.
\end{equation}
%where $\alpha=\frac{\theta}{2}$.
Denote $H_m=G(N, \rho)\cap \tilde H_m$.
%$H_{m_k}\subset H_{m_{k+1}}$ and $H_{m_k}\cap [1, m_k]=
%H_{m_{k+1}}\cap [1, m_k]$. Finally, we define $H=\cup_{k\ge 1} H_{m_k}$.
Then the properties (a) and (b)
%\marginpar{change}
hold for $n\in H_m$. Furthermore, by Lemma~\ref{lem:hyp},
for every $n\in H$, $\diam V_n\le 2 C (\lambda e^{2\epsilon})^{-n}$, where $C=C(\lambda, 2\epsilon)>0$.
Hence, $\diam V_n\le \lambda^{-n}$, for every $n\ge \tilde n$,
where $\tilde n=\tilde n(\lambda, \epsilon)$. Thus (c) holds too.
\end{proof}

\begin{remark}\label{notdens}
Theorem~\ref{thm:positiv}
%\marginpar{Changes}
implies the following fact which is announced in the Abstract: every point $z$ of a polynomial Julia set $J(g)$ satisfying
$\chi_{+}(g, z)>0$ is not a Lebesgue density point of $J(g)$.
(By Remark~\ref{ratexp}, this holds as well
for rational functions with nowhere dense Julia sets.)
%In fact,
%$J(g)$ is even upper mean porous at such $z$.
In fact $J(g)$ is even {\it upper mean porous} at such $z$
which means:
there are $r>0$ and a
subset of $\Z^+$ of a positive  upper density such that
for every $j$ in this subset $B(z,2^{-j})$
contains a ball disjoint from $J(g)$ of radius $r2^{-j}$.
%$$\limsup_{n\to\infty} \frac{\#\ \{j: 0\le j\le n, B(z,2^{-j})
%\text{ contains a ball disjoint from } J(g) \text{ of radius } r2^{-j}\}}{n} \ge \sigma.$$
The proof is the same as in~\cite{PR} in the case of $g$ satisfying
Collet-Eckmann condition.
\end{remark}

%\section{Proof of Theorem~\ref{thm:typ}}\label{typ}
\section{Unicritical polynomials}\label{typ}
In this Section, $f(z)=z^d+c$, where $d\ge 2$ and $c\in\C$.
%\marginpar{changes}
%and we assume that $J(f)$ is connected.
Recall that a point $z\in J(f)$ is {\em slowly recurrent}
if for any $\alpha>0$, $|f^n(z)|\ge e^{-\alpha n}$ holds for every large $n$.
%As we proved in Lemma~\ref{lem:slexp},  $\chi_{-}(f,z)\ge 0$ for every such point.

The next fact is crucial.
\begin{lemma}\label{lem:slarea}
Almost every point $z\in J(f)$ is slowly recurrent.
\end{lemma}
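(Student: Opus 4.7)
My plan uses the first Borel--Cantelli lemma. For fixed $\alpha > 0$ and $n \ge 1$, define
\[
E_n(\alpha) := \{z \in J(f) : |f^n(z)| < e^{-\alpha n}\} \subset f^{-n}(B(0, e^{-\alpha n})) \cap J(f).
\]
The set of $z \in J(f)$ that fail to be slowly recurrent equals $\bigcup_{k \ge 1} \limsup_{n \to \infty} E_n(1/k)$, a countable union, so it suffices to prove $\sum_{n \ge 1} \Leb(E_n(\alpha)) < \infty$ for each fixed $\alpha > 0$. Borel--Cantelli then gives $\Leb(\limsup_n E_n(\alpha)) = 0$ and the lemma follows on taking the countable union over $k$.

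To obtain summability, I would estimate $\Leb(E_n(\alpha))$ by the change-of-variables formula
\[
\Leb(E_n(\alpha)) \;=\; \int_{B(0,e^{-\alpha n})}\ \sum_{\substack{\zeta \in f^{-n}(w) \\ \zeta \in J(f)}} \frac{d\Leb(w)}{|Df^n(\zeta)|^2}.
\]
To bound the integrand, I would partition the connected components of $f^{-n}(B(0,e^{-\alpha n}))$ according to the ``return pattern'' of a reference orbit in each component to shrinking neighborhoods of the critical point $0$, and bound the area of each component by Koebe distortion combined with the derivative estimates of Lemmas~\ref{lem:return} and~\ref{lem:pass} along stretches of orbit that stay away from $0$. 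Under the standing hypothesis $\Leb(J(f)) > 0$, which forces $f$ to have no attracting cycle, Theorem~\ref{thm:main} gives $\chi_-(f,c) \ge 0$ at the critical value, which in turn constrains how the post-critical set $\{f^k(c)\}$ can accumulate at $0$ and hence the interplay of different pullback components.

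The main difficulty is that generic orbits in $J(f)$ genuinely have zero Lyapunov exponent (this being the conclusion of the theorem that Lemma~\ref{lem:slarea} is used to prove), so a naive Koebe estimate applied branch-by-branch gives only $\Leb(V) \lesssim e^{-2\alpha n}$ per component while the number of components of $f^{-n}(B(0,e^{-\alpha n}))$ can be as large as $d^n$. The required gain must therefore come from the combinatorial sum over return patterns: ``deep'' returns of the orbit toward $0$ should be controlled, both in frequency and in cumulative cost, by the geometry of the post-critical set via Theorem~\ref{thm:main}, so that only a sub-exponentially growing collection of patterns contributes effectively and the summation over patterns yields a bound of the form $\Leb(E_n(\alpha)) \le C e^{-\varepsilon n}$ for some $\varepsilon = \varepsilon(\alpha) > 0$. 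Arranging this bookkeeping so that the combinatorics close cleanly is the heart of the argument and the principal obstacle.
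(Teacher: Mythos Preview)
Your Borel--Cantelli framework is exactly right, but the plan you outline for bounding $\Leb(E_n(\alpha))$ misses both of the ideas that make the argument close, and the ``combinatorial bookkeeping'' you describe as the principal obstacle is not how the paper (or any clean version of the proof) proceeds.

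The first missing idea is a \emph{two-scale trick}: do not try to estimate the area of each component of $f^{-n}(B(0,e^{-\alpha n}))$ directly. Instead, define $E_n$ with the \emph{smaller} radius $e^{-2\alpha n}$, but pull back the \emph{larger} ball $B(0,e^{-\alpha n})$. For a component $V$ of $f^{-n}(B(0,e^{-\alpha n}))$, the set $E_n\cap V$ lies in the preimage of $B(0,e^{-2\alpha n})\subset B(0,e^{-\alpha n})$. If $f^n:V\to B(0,e^{-\alpha n})$ has bounded degree, then Koebe distortion for multivalent maps gives $\area(E_n\cap V)/\area(V)\le e^{-\alpha' n}$ for some $\alpha'>0$. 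Since all the components $V$ are disjoint and contained in a fixed disk, summing gives $\area(E_n)\le C e^{-\alpha' n}$ without ever counting components.

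The second missing idea is the degree bound itself, and here your appeal to Theorem~\ref{thm:main} and Lemmas~\ref{lem:return}, \ref{lem:pass} is misdirected. What one actually needs is the elementary fact (a special case of \cite{P}, Lemma~1) that for $w\in J(f)$ with $|w|<\eps$ and $|f^s(w)|<\eps$ one has $s\ge K\log(1/\eps)$. Applied with $\eps=e^{-\alpha n}$, this says consecutive visits of the orbit of $V$ to $0$ are separated by at least $K\alpha n$ iterates, so there are at most $1+1/(K\alpha)$ such visits in $\{0,\dots,n\}$, and hence $\deg(f^n|_V)\le d^{1+1/(K\alpha)}=:D(d,\alpha)$, uniformly in $n$. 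No derivative estimates along orbits and no partition by return patterns are needed.
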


\begin{proof}
If the critical point $0$ is not in the Julia set, then $0$ is attracted by either an attracting cycle or a parabolic cycle, and it is well known that the Julia set has measure zero. In the following, we assume that $0\in J(f)$.
%\marginpar{changes}

We need the following fact which is a particular case of~\cite{P}, Lemma 1.
%\marginpar{claim changed with reference instead of proof}

{\bf Claim 1.} There is a constant $K=K(d)>0$ such that for any $\eps>0$, any $w\in J(f)$
and any integer $s\ge 1$, if $|w|<\eps$ and $|f^s(w)|<\eps$,
then $$s\ge C(\eps):= K\log(\eps^{-1}).$$

Now we fix $\alpha>0$ %\marginpar{changes}
and consider $E_n=\{z\in J(f): |f^n(z)|<e^{-2\alpha n}\}$. Let $V$ be a component
of $f^{-n}(B(0, e^{-\alpha n}))$.
%For each $z\in H_n$, consider
%a pull back $V\ni z$ of $B(0, e^{-\alpha n})$ by $f^n$.
If $0\le j_1<j_2\le n$ are such that
$f^{j_1}(V)$ and $f^{j_2}(V)$ both contain $0$, then $j_2-j_1$ is a return time of
$f^{n-j_2}(0)\in B(0, e^{-\alpha n})$ into the ball. By Claim 1, $j_2-j_1\ge C(e^{-\alpha n})=\tilde C\alpha n$
for some constant $\tilde C=\tilde C(d)>0$. Thus
$\#\ \{0\le j\le n: f^j(V)\ni 0\}$ is bounded by
$1+1/(\alpha C)$. It means that the map
$f^n: V\to B(0, e^{-\alpha n})$ is a (branched) cover with a degree which is
uniformly bounded by some $D=D(d, \alpha)$.
Since $E_n\cap V\subset f^{-n}(B(0, e^{-2\alpha n}))$, it follows
from a version of the Koebe distortion theorem for multivalent maps, see e.g.~\cite{PR}, Lemma 2.1,
that there exists $\alpha'>0$, which depends only on $\alpha$ and $D$
such that $\area(V\cap E_n)/\area(V)\le e^{-\alpha' n}$.
%is bounded from above by an exponentially small term.
As $V\subset f^{-n}(B(0, e^{-\alpha n}))\subset B(0, 3)$,
$\area(E_n)$ is exponentially small with $n$ and thus $\sum_{n=1}^\infty \area(E_n)<\infty$.
By the Borel-Cantelli lemma, a.e. $z$ is
contained in only finitely many $E_n$.
\end{proof}

\begin{proof}[Proof of Theorem~\ref{thm:typ}]
By Lemma~\ref{lem:slexp}, $\chi_{-}(z)\ge 0$ for every slowly recurrent $z\in J(f)$ and using
Lemma~\ref{lem:slarea}, $\chi_{-}(z)\ge 0$ for almost every $z\in J(f)$.
And $\chi_{+}(z)\le 0$ for a.e. $z$ because each $z$ with
$\chi_{+}(z)> 0$ is not a Lebesgue density point by Theorem~\ref{thm:up}.
\end{proof}

We shall now consider backward orbits of the critical point and prove Theorem~\ref{thm:backward}
of the Introduction.
%\begin{theorem}\label{thm:backward}
%Consider a unicritical polynomial $f(z)=z^d+c$ which has no attracting cycle in $\C$.
%Let $\bar x=\{x_{-n}\}_{n=0}^\infty$, $x_0=0$, $f(x_{-n})=x_{-(n-1)}$, $n>0$,
%be a backward orbit of $0$. Then
%$$\chi_-^{back}(f, \bar x):=\liminf_{n\to\infty} \frac{1}{n}\log |Df^n(x_{-n})|\ge 0.$$
%\end{theorem}
%This theorem can be deduced from Theorem~\ref{uni} by modifying the proof of Propsition 1 in \cite{GS}. We leave the details to an interested reader. In the following, we shall provide a proof based on modification of our argument in Section 2.
We first prove the following variation of Lemma~\ref{lem:return} for polynomial maps
which gives a better estimate.
%% that for polynomial maps, Lemma~\ref{lem:return} can be improved as follows:

\begin{lemma} \label{lem:returnpoly}
Let $f(z)=z^d+c$ where $d\ge 2$ and $c\in\C$. Assume that $f$ has no an attracting cycle. Then for each $\lambda>1$ there is $\delta_0>0$ such that for each $\delta\in (0,\delta_0)$, if $z\not\in B(0,\delta)$ and $n\ge 1$ is the minimal positive integer such that $|f^n(z)|\le \delta$, then
$$|Df^n(z)|\ge \frac{\delta}{12|z|}\lambda^{-n}.$$
Moreover, if $|z|=\delta$, then $$|Df^n(z)|\ge \lambda^{-n}.$$
\end{lemma}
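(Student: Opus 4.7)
The plan is to imitate the pullback construction of Lemma~\ref{lem:return}, but now tracking returns to the neighborhood $\overline{B(0,\delta)}$ of the critical point rather than to the neighborhood $\overline{B(c,\delta)}$ of the critical value. Writing $z_i=f^i(z)$, I would define inductively a non-decreasing sequence $0<\tau_0\le\tau_1\le\cdots\le\tau_n=\delta$ together with open sets $U_i\ni z_i$ for which $f^{n-i}\colon U_i\to B(z_n,\tau_i)$ is conformal, and set $\mathcal{I}=\{i<n:\tau_i<\tau_{i+1}\}$, so that $c\in\partial f(U_i)$ for each $i\in\mathcal{I}$.

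The direct analogue of Lemma~\ref{lem:return}'s Claim~1 yields the first obstruction $U_i\not\supset\overline{B(0,2\delta)}$: the inclusion $f^{n-i}(U_i)=B(z_n,\tau_i)\subset B(z_n,\delta)\subset B(0,2\delta)$ would otherwise give, by Schwarz, an attracting fixed point of $f^{n-i}$ in $B(0,2\delta)$, contradicting the hypothesis. Hence $\eps_0:=\dist(z,\partial U_0)\le|z|+2\delta\le 3|z|$, and by Koebe's $\tfrac14$-theorem $|Df^n(z)|\ge\tau_0/(4\eps_0)\ge\tau_0/(12|z|)$, reducing the problem to $\tau_0\ge\delta\lambda^{-n}$. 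To bound the annulus moduli effectively I need a second, sharper obstruction that exploits the unicritical polynomial shape $f(B(0,r))=B(c,r^d)$: for any $\eta\in(0,d-1)$ and any $\delta<\delta_0$ with $(2\delta)^d<\delta^{d-\eta}$, one has $U_{i+1}\not\supset\overline{B(c,\delta^{d-\eta})}$ for $0\le i\le n-2$, because otherwise $f^{n-i-1}$ would send $\overline{B(c,\delta^{d-\eta})}$ into $B(z_n,\tau_{i+1})\subset B(0,2\delta)$, so $f^{n-i}$ would send it into $B(c,(2\delta)^d)\subsetneq B(c,\delta^{d-\eta})$, yielding an attracting cycle.

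For $i\in\mathcal{I}$, $c\in\partial f(U_i)\ni z_{i+1}$ gives $\diam f(U_i)\ge|z_{i+1}-c|=|z_i|^d\ge\delta^d$, and combined with the second obstruction a Teichm\"uller-style modulus estimate produces $\log(\tau_{i+1}/\tau_i)\le C_0(1+\eta\log(1/\delta))$ for an absolute $C_0$. For consecutive indices $i<i'$ in $\mathcal{I}$, the points $f^{n-i'-1}(c)$ and $f^{n-i-1}(c)=f^{i'-i}(f^{n-i'-1}(c))$ both lie in $\overline{B(0,2\delta)}$, so Claim~1 in the proof of Lemma~\ref{lem:slarea} yields $i'-i\ge K_f\log(1/\delta)$ for some $K_f=K_f(d)>0$; hence $n\ge K_f\#\mathcal{I}\log(1/\delta)$. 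Choosing $\eta<K_f\log\lambda/C_0$ and then $\delta_0$ small enough, a short computation gives $\tau_0\ge\delta\lambda^{-n}$ as required. The main difficulty is precisely this need for a sharper obstruction than the one used in Lemma~\ref{lem:return}: the crude bound $U_{i+1}\not\supset\overline{B(c,2\delta)}$ would produce moduli of order $(d-1)\log(1/\delta)$, forcing $\lambda\ge e^{(d-1)/K_f}$ and failing for $\lambda$ close to $1$; the refined disk of radius $\delta^{d-\eta}$ replaces $d-1$ by an arbitrary $\eta>0$ and makes the estimate work for every $\lambda>1$. The \emph{moreover} clause when $|z|=\delta$ should follow from a slight sharpening of the final Koebe step, absorbing the factor $1/12$ into $\lambda$ after further shrinking $\delta_0$.
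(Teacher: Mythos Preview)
Your approach works but takes a genuinely different route from the paper's. The paper asserts, without details, that ``the conclusion of Claim~1 holds with $c$ replaced by $0$'', i.e.\ $\tau_{i+1}/\tau_i\le K$ with $K=K(d)$ \emph{independent of $\delta$}. You are right that a literal transcription of the Claim~1 modulus argument at level $i{+}1$ runs into the problem $\diam f(U_i)\ge\delta^d$ rather than $\ge\delta$; one clean way to recover a universal $K$ is to pull back once more to $W_i:=f^{-1}(U_{i+1})$ (simply connected, since $c\in U_{i+1}$), observe $W_i\not\supset\overline{B(0,2\delta)}$ by the same Schwarz argument, and note that the $d$-fold symmetric set $\bigcup_{k}e^{2\pi ik/d}\overline{U_i}$ contains $0$ and $z_i$ and hence has diameter $\ge\delta$, so the unbranched degree-$d$ cover $W_i\setminus\bigcup_k e^{2\pi ik/d}\overline{U_i}\to U_{i+1}\setminus\overline{f(U_i)}$ gives $\log(\tau_{i+1}/\tau_i)\le dK'$ with $K'$ absolute. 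With $K$ universal, the endgame is exactly as in Lemma~\ref{lem:return}, needing only the qualitative fact that the first-return time to $B(0,2\delta)$ tends to $\infty$. Your route instead stays at level $i{+}1$: the refined obstruction $U_{i+1}\not\supset\overline{B(c,\delta^{d-\eta})}$ is a nice device, producing a $\delta$-dependent ratio bound which you then balance against the \emph{quantitative} return-time estimate $i'-i\gtrsim\log(1/\delta)$ from Claim~1 in the proof of Lemma~\ref{lem:slarea}. One small caveat: that Claim is stated for $w\in J(f)$, so strictly you need $0\in J(f)$; when $0$ lies in the Fatou set (necessarily a parabolic basin under the hypotheses, since Siegel disks contain no critical points and polynomials have no Herman rings), $\inf_{k\ge1}|f^k(0)|>0$, hence $\mathcal{I}=\emptyset$ for small $\delta$ and the lemma is immediate. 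The paper's route avoids this case split and the appeal to~\cite{P}; yours avoids the extra pullback/symmetry step.
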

%\begin{remark} A better estimate of Lemma~\ref{lem:returnpoly} compared to
%The meaning of the difference of Lemma~\ref{lem:returnpoly} with
%a more general Lemma~\ref{lem:return}
%is caused by the fact that the point $c=c(f)$ for the map $f(z)=z^d+c\in \sU$ is a branch point. Then one can
%consider the first entry of $z$
%to a $\delta$-neighborhood of $0=f^{-1}(c)$ and
%replace the factor $|f^n(z)-c|$ of~(\ref{lambdadelta})-(\ref{nolambdadelta})
%(near the singular point $c$)
%by $\delta$ (near the critical point $0$). This is essential if an orbit hits $0$.
%\end{remark}
\begin{proof} Fix $\lambda$ and $\delta$. Define the numbers $\tau_i$, domains $U_i$, $0\le i\le n$, the index set $\mathcal{I}$ and the number $\eps_0$ as in the proof of Lemma~\ref{lem:return}, with the only difference that we start with $U_n=B(z_n,\delta)$, so $\tau_n=\delta$. As $f$ has no attracting cycle in $\C$, $U_i\not\supset \overline{B(0,2\delta)}$ for each $0\le i\le n$.  Thus the conclusion of Claim 1 of Section~\ref{1}
holds with $c$ replaced by $0$ in the current setting. So we obtain similar to (\ref{genin}) the following estimate:
$|Df^n(z)|\ge \delta/(12|z| K^{N}).$
Arguing as in the last part of the proof of Lemma~\ref{lem:return}, we show that $K^{N}\le \lambda^{n}$,  provided that $\delta$ is small enough.
In the case $|z|=\delta$, we have $12 K^{N}\le \lambda^{n}.$ Thus the lemma holds.
\end{proof}

As an immediate corollary, we have
\begin{lemma}\label{lem:closereturn}
Let $f(z)=z^d+c$ where $d\ge 2$ and $c\in\C$. Assume that $f$ has no an attracting cycle. Then for each $\lambda>1$ there is $\delta_0>0$ such that for each $z\in\C$ and $n\ge 1$ with $|f^n(z)|\le \delta_0$ and with $|f^n(z)|\le |f^j(z)|$ for each $0\le j<n$,  we have
$$|Df^n(z)|\ge \min\left(\frac{\delta_0}{12|z|},1\right) \lambda^{-n}.$$
\end{lemma}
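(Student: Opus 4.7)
The plan is to split on the size of $|z|$ and reduce each case to Lemma~\ref{lem:returnpoly}. Fix $\lambda>1$, let $\delta'>0$ denote the constant supplied by Lemma~\ref{lem:returnpoly} for this $\lambda$, and take the $\delta_0$ in the present statement to be any number in $(0,\delta')$.

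Suppose first that $|z|\le\delta_0$. I will iterate Lemma~\ref{lem:returnpoly} along successive ``closest-return'' times. Set $n_0=0$ and, while $n_k<n$, let $n_{k+1}$ be the smallest index in $(n_k,n]$ with $|f^{n_{k+1}}(z)|\le|f^{n_k}(z)|$; the choice $j=n$ always qualifies by hypothesis, so this gives $0=n_0<n_1<\cdots<n_L=n$. Put $w_k=f^{n_k}(z)$. The minimality in the definition of $n_{k+1}$ forces $|f^j(w_k)|>|w_k|$ for every $0<j<n_{k+1}-n_k$, while $|f^{n_{k+1}-n_k}(w_k)|\le|w_k|$, so $n_{k+1}-n_k$ is exactly the minimal positive integer $j$ with $|f^j(w_k)|\le|w_k|$. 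Since $|w_k|\le|z|\le\delta_0<\delta'$, the ``moreover'' clause of Lemma~\ref{lem:returnpoly} applied with $\delta=|w_k|$ gives $|Df^{n_{k+1}-n_k}(w_k)|\ge \lambda^{-(n_{k+1}-n_k)}$. The exponents telescope to $|Df^n(z)|\ge \lambda^{-n}\ge \min(\delta_0/(12|z|),1)\lambda^{-n}$.

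Now suppose $|z|>\delta_0$. Let $n_1$ be the smallest positive integer with $|f^{n_1}(z)|\le\delta_0$; because $|f^n(z)|\le\delta_0$ we have $n_1\le n$. Applying Lemma~\ref{lem:returnpoly} with $\delta=\delta_0$ yields $|Df^{n_1}(z)|\ge (\delta_0/(12|z|))\lambda^{-n_1}$. The point $z':=f^{n_1}(z)$ satisfies $|z'|\le\delta_0$ and inherits the minimality hypothesis $|f^{n-n_1}(z')|\le|f^j(z')|$ for $0\le j\le n-n_1$, so the previous case applied to $z'$ gives $|Df^{n-n_1}(z')|\ge \lambda^{-(n-n_1)}$. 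By the chain rule, $|Df^n(z)|\ge (\delta_0/(12|z|))\lambda^{-n}$, and since $|z|>\delta_0$ we have $\min(\delta_0/(12|z|),1)=\delta_0/(12|z|)$.

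The only delicate point is the verification in the first case that $n_{k+1}-n_k$ is literally the minimal positive integer $j$ with $|f^j(w_k)|\le|w_k|$, so that the sharper ``$|z|=\delta$'' branch of Lemma~\ref{lem:returnpoly} (rather than the first branch with its $\delta/(12|z|)$ factor) applies at every step. Without this the $1/12$ factors would compound over the $L$ segments and destroy the bound; the verification itself uses nothing more than the definition of $n_{k+1}$ as a minimal index.
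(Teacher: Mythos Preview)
Your proof is correct and follows essentially the same route as the paper's: both arguments split on whether $|z|$ is above or below $\delta_0$, use the first inequality of Lemma~\ref{lem:returnpoly} once (with $\delta=\delta_0$) to handle the initial approach when $|z|>\delta_0$, and then iterate the ``moreover'' clause along the successive closest-return times $n_0<n_1<\cdots<n_L=n$ with $\delta=|f^{n_k}(z)|$ at each step. Your version is in fact slightly tidier in one respect: by taking $\delta_0$ strictly smaller than the constant $\delta'$ from Lemma~\ref{lem:returnpoly}, you ensure the condition $\delta\in(0,\delta')$ is met in every application, whereas the paper applies the lemma at the endpoint $\delta=\delta_0$.
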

\begin{proof}
Let $\delta_0$ be given by Lemma~\ref{lem:returnpoly} and let $\delta_0'=\min(|z|,\delta_0)$.
By the assumption, there is a sequence of integers $1\le n_1<n_2<\cdots<n_k=n$ such that
\begin{itemize}
\item $n_1$ is the minimal positive integer such that $|f^{n_1}(z)|\le \delta_0'$;
\item $n_{j+1}$ is the minimal integer with $n_{j+1}>n_j$ and $|f^{n_{j+1}}(z)|\le |f^{n_j}(z)|$ for each $j=1,\ldots, k-1$.
\end{itemize}
If $|z|\ge \delta_0$, then $n_1$ is the minimal positive integer such that $|f^{n_1}(z)|\le \delta_0$, so
by Lemma~\ref{lem:returnpoly}, we have
$$|Df^{n_1}(z)|\ge \frac{\delta_0}{12|z|}\lambda^{-n}.$$
If $|z|<\delta_0$, then $|z|=\delta_0'\in (0,\delta_0]$ and $n_1$ is the minimal integer such that $|f^{n_1}(z)|\le \delta_0'$, so by the latter inequality of Lemma~\ref{lem:returnpoly}, we have
$$|Df^{n_1}(z)|\ge \lambda^{-n_1}.$$
For each $1\le j<n$, putting $\delta_j:=|f^{n_j}(z)|\in (0,\delta_0]$, $n_{j+1}-n_j$ is the minimal positive integer such that |$f^{n_{j+1}-n_j}(f^{n_j}(z))|\le \delta_j$, so by the latter inequality in Lemma~\ref{lem:returnpoly} again, we have
$$|Df^{n_{j+1}-n_j}(f^{n_j}(z))|\ge \lambda^{-(n_{j+1}-n_j)}.$$
Therefore, if $|z|\ge \delta_0$, then
$$|Df^n(z)|=|Df^{n_1}(z)|\prod_{j=1}^{k-1} |Df^{n_{j+1}-n_j}(f^{n_j}(z))|\ge \frac{\delta_0}{12|z|}\lambda^{-n},$$
and if $|z|<\delta_0$, then
$$|Df^n(z)|\ge \lambda^{-n}.$$
\end{proof}

\begin{proof}[Proof of Theorem~\ref{thm:backward}]
Fix $\lambda>1$ and let $\delta_0$ be given by Lemma~\ref{lem:closereturn}. Let $\bar{x}=\{x_{-n}\}_{n=0}^\infty$ be a backward orbit of $0$. Then for each $n$, applying Lemma~\ref{lem:closereturn} to $z=x_{-n}$, we obtain
$$|Df^n(x_{-n})|\ge K_n \lambda^{-n},$$
where $K_n=\min \left(\delta_0/(12|x_{-n}|), 1\right).$ Clearly $K_{n}$ is bounded from below by a positive constant depending only on $f$. Thus
$$\chi_-^{back}(f, \bar x) \ge -\log \lambda.$$
Since this holds for all $\lambda>1$, $\chi_-^{back}(f,\bar x)\ge 0$.
\end{proof}

\section{Some applications and remarks}
Let us note the following special case of Theorem~\ref{thm:main}:
\begin{theorem}\label{rat}
Let $g$ be a rational function on the Riemann sphere of degree at least two.
Given a critical value $c$ of $g$,
define its postcritical
set $P(c)=\overline{\cup_{n\ge 0}g^n(c)}$.
Assume that $c_0$ is a critical value of $g$ not in the basin of
an attracting cycle, such that $P(c_0)$ is disjoint
from the union $X$ of the postcritical sets of
all other critical values of $g$.
Then
$$\chi_-(g, c_0)=\liminf_{n\to\infty} \frac{1}{n}\log \|Dg^n(c_0)\|\ge 0,$$
where $\|\cdot\|$ denote the norm in the spherical metric.
\end{theorem}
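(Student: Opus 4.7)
The plan is to realize $g$, after a M\"obius conjugation and restriction, as a map in $\sU_{V,V'}$ whose distinguished critical value is $c_0$, and then invoke Theorem~\ref{thm:main}. Two preliminary observations make this work: by a Riemann--Hurwitz count every rational map of degree at least $2$ has at least two critical values, so $X \ne \emptyset$; and $X$ is closed and forward-invariant under $g$, as the closure of a union of forward-invariant orbits, yet disjoint from the compact set $P(c_0)$ by hypothesis.

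I first choose a M\"obius transformation $\varphi$ so that, after replacing $g$ by $\varphi^{-1}\circ g\circ\varphi$ (and $c_0$ by $\varphi^{-1}(c_0)$), one has $\infty\in X$, and hence $P(c_0)\subset \C$ is compact. I then set $V':=\CC\setminus X\subset \C$ and $V:=g^{-1}(V')$. Forward-invariance of $X$ gives $g^{-1}(V')\subset V'$, so $V\subset V'$. Every critical value of $g$ other than $c_0$ lies in $X$, so $c_0$ is the unique critical value of $g$ in $V'$. As $g:\CC\to\CC$ is proper, so is $g:V\to V'$, and removing $g^{-1}(c_0)$ leaves a proper local diffeomorphism $V\setminus g^{-1}(c_0)\to V'\setminus\{c_0\}$, which is therefore an unbranched covering map: condition (U1) is verified. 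The orbit of $c_0$ lies in $P(c_0)\subset V'$, hence in $V=g^{-1}(V')$ by forward-invariance of $P(c_0)$; and $\rho:=\mathrm{dist}(P(c_0), X\cap\C)>0$ yields $B(g^n(c_0),\rho)\subset V'$ for every $n\ge 0$, verifying (U2).

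Theorem~\ref{thm:main} now applies and gives $\liminf_n \tfrac{1}{n}\log|Dg^n(c_0)|_{\mathrm{euc}}\ge 0$. Since the orbit of $c_0$ stays in the compact set $P(c_0)\subset \C$, the spherical and Euclidean norms of $Dg^n(c_0)$ differ by factors uniformly bounded above and below, and likewise the M\"obius conjugation contributes only bounded factors along the orbit; both sets of factors disappear in the $\liminf$ of $\tfrac{1}{n}\log$. The desired bound $\chi_-(g,c_0)\ge 0$ in the spherical metric and in the original coordinates follows.

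The main technical point is the verification of (U1): it depends crucially on the forward-invariance of $X$ (to obtain $V\subset V'$) and on the fact that $X$ contains every critical value of $g$ except $c_0$ (to ensure $c_0$ is the only critical value of $g|_V$). The rest, including the bookkeeping between Euclidean and spherical norms and the unwinding of the M\"obius conjugation, is routine because everything happens on the compact set $P(c_0)\subset\C$.
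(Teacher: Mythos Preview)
Your proof is correct and follows exactly the paper's approach: conjugate so that $\infty\in X$, set $V'=\overline{\C}\setminus X$, $V=g^{-1}(V')$, and apply Theorem~\ref{thm:main}. You have simply filled in the details the paper leaves implicit---the nonemptiness of $X$, the verification of (U1) and (U2), and the comparison of spherical and Euclidean norms on the compact set $P(c_0)$---all of which are handled correctly.
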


\begin{proof}
By means of a M\"obius conjugacy, we may assume that $\infty\in X$, so that the orbit of $c_0$
lies in a compact subset of $\C$ and
$\chi_-(g,c_0)$ can be calculated using the Euclidean metric instead of the spherical metric.
Then define $V'=\overline{\C}\setminus X$ and $V=g^{-1}(V')$, and apply Theorem~\ref{thm:main} to $g\in \sU_{V, V'}$.
\end{proof}

An immediate corollary of Theorem~\ref{uni} along with
Remark 13 of~\cite{lanal} is as follows:
\begin{coro}\label{ruelle}
Assume that the map $f(z)=z^d+c$ has no attracting cycles.
Then the power series
$$F(t)=1+\sum_{n=1}^\infty\frac{t^n}{Df^n(c)}$$
has the radius of convergence at least $1$, and
\begin{equation}\label{fr}
F(t)\not=0 \text{ for every } |t|<1.
\end{equation}
\end{coro}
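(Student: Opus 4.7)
The plan is to deduce each of the two assertions by a direct appeal to one of the two sources cited in the corollary's preamble: Theorem~\ref{uni} of the present paper and Remark~13 of~\cite{lanal}.

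First, for the radius-of-convergence claim, I would invoke Theorem~\ref{uni} directly. The hypothesis ``$f$ has no attracting cycles'' in particular means $c\notin AB(f)$, so Theorem~\ref{uni} applies and yields $\chi_-(f,c)\ge 0$, i.e., $\liminf_{n\to\infty}\frac{1}{n}\log|Df^n(c)|\ge 0$. The Cauchy--Hadamard formula then gives
$$R \;=\; \bigl(\limsup_{n\to\infty}|Df^n(c)|^{-1/n}\bigr)^{-1} \;=\; \exp(\chi_-(f,c)) \;\ge\; 1$$
for the radius of convergence of $F$, which is the first assertion.

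Second, for the non-vanishing statement~(\ref{fr}), I would invoke Remark~13 of~\cite{lanal}. That remark (part of Levin's analytic theory of the generating series associated with the critical orbit of a unicritical polynomial) is precisely the implication ``no attracting cycle and radius of convergence at least $1$ $\Longrightarrow$ $F(t)\ne 0$ on $|t|<1$.'' My task reduces to checking its hypotheses in the present setting --- which is immediate, since $f(z)=z^d+c$ is unicritical and the convergence bound was just established in the preceding paragraph --- and then quoting its conclusion to obtain~(\ref{fr}).

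The ``hard part,'' if any, is therefore not in the present corollary but in Remark~13 itself: the substantive function-theoretic content linking zeros of $F$ in $|t|<1$ to the dynamics of $f$ (e.g.\ to the absence of attracting periodic orbits) lives in~\cite{lanal}. What Theorem~\ref{uni} supplies in the combination is exactly the convergence input that unlocks that content. Once both are in hand, the deduction is, as advertised, a one-line argument.
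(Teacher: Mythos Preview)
Your proposal is correct and matches the paper's approach exactly: the paper gives no separate proof of this corollary, merely flagging it as ``an immediate corollary of Theorem~\ref{uni} along with Remark~13 of~\cite{lanal},'' and you have spelled out precisely how those two ingredients combine (Cauchy--Hadamard from $\chi_-(f,c)\ge 0$ for the radius, the cited remark for the non-vanishing).
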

\begin{remark}The function $F(t)$
should be interpreted as ``Fredholm determinant''
of the operator $T: \phi\mapsto \sum_{w: f(w)=z}\frac{\phi(w)}{Df(w)^2}$
acting in a space of functions $\phi$, which are analytic outside of
$J(f)$ and locally integrable on the plane.
Then~(\ref{fr}) reflects the fact that $T$ is
a contraction operator in this space.
Note that this operator plays, in particular, an important
role (after Thurston)
in the problem of stability in holomorphic dynamics.
\end{remark}

Another consequence of Theorem~\ref{thm:main} is that:
\begin{coro}\label{cor:pollike}
Let $g_i: V_i\to V_i'$, $i=0,1$, be two mappings
in the class $\sU$ which are quasi-conformally conjugated
(i.e., there exists a q-c map $h: \C\to \C$ such that $h(V_0)=V_1$,  $h(V_0')=V_1'$, and $h\circ g_0=g_1\circ h$ on $V_0$).
Assume that $\omega_{g_0}(c(g_0))$ (the $\omega$-limit set of the point $c(g_0)$ by the map
$g_0$) is compactly contained in $V_0$. If, for a subsequence $n_k\to \infty$,
$\lim_{k\to\infty} \frac{1}{n_k}\log |Dg_0^{n_k}(c(g_0))|=0$,
then also $\lim_{k\to\infty} \frac{1}{n_k}\log |Dg_1^{n_k}(c(g_1))|=0$.
\end{coro}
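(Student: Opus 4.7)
I would prove the corollary by sandwiching $\tfrac{1}{n_k}\log|Dg_1^{n_k}(c(g_1))|$ between zero on both sides. The lower bound follows quickly from Theorem~\ref{thm:main}; the upper bound requires a quasi-symmetric comparison of pullback neighborhoods through the conjugacy $h$.

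For the lower bound, first note that $c(g_0)\notin AB(g_0)$: otherwise $|Dg_0^n(c(g_0))|$ would decay geometrically in $n$ and contradict the vanishing subsequential limit in the hypothesis. Since the q-c homeomorphism $h$ maps attracting cycles of $g_0$ to attracting cycles of $g_1$ with the same multiplier type (and basins to basins), $c(g_1)=h(c(g_0))\notin AB(g_1)$. Applying Theorem~\ref{thm:main} to $g_1\in\sU_{V_1,V_1'}$ gives $\liminf_n\tfrac{1}{n}\log|Dg_1^n(c(g_1))|\ge 0$, which holds in particular along $\{n_k\}$.

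For the upper bound, write $c_n^{(j)}:=g_j^n(c(g_j))$, so $c_n^{(1)}=h(c_n^{(0)})$. By the compactness of $\omega_{g_0}(c(g_0))$ in $V_0$, there is a compact set $K\subset V_0$ containing $c_n^{(0)}$ for all large $n$, and $\rho>0$ such that $B(c_n^{(0)},2\rho)\subset V_0'$ for such $n$. We may assume $c(g_0)$ is not periodic, so $Dg_j^n(c(g_j))\neq 0$ for all $n$ and $j=0,1$. Let $U_n^{(0)}$ denote the component of $(g_0^n)^{-1}(B(c_n^{(0)},\rho))$ containing $c(g_0)$, and set $U_n^{(1)}:=h(U_n^{(0)})$. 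Adapting the pullback-distortion telescoping of Lemma~\ref{lem:pass} (with the condition $\omega_{g_0}(c(g_0))\Subset V_0$ replacing the slow-recurrence hypothesis of Lemma~\ref{lem:slexp}), for every $\lambda>1$ one obtains a two-sided estimate
\[
\frac{c(\lambda)\,\lambda^{-n}}{|Dg_j^n(c(g_j))|}\ \le\ \diam U_n^{(j)}\ \le\ \frac{C(\lambda)\,\lambda^{n}}{|Dg_j^n(c(g_j))|}, \qquad j=0,1,
\]
for all sufficiently large $n$. The lower bound follows from Koebe applied to the univalent inverse branch of $g_j^n$ at the non-critical point $c(g_j)$; the upper bound uses modulus-control at each near-return of the critical orbit, as in Section~\ref{1}.

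Combining these with Mori's theorem -- which asserts that $h|_K$ is H\"older continuous with exponent $\alpha=1/K_h$, where $K_h$ is the dilatation of $h$ -- yields $\diam U_n^{(0)}\le C'(\diam U_n^{(1)})^{1/K_h}$, hence
\[
|Dg_1^n(c(g_1))|\ \le\ C''(\lambda)\,\lambda^{O(n)}\,|Dg_0^n(c(g_0))|^{K_h}.
\]
Dividing by $n_k$, passing to the limit via $\lim_k\tfrac{1}{n_k}\log|Dg_0^{n_k}(c(g_0))|=0$, and then letting $\lambda\downarrow 1$, we conclude $\limsup_k\tfrac{1}{n_k}\log|Dg_1^{n_k}(c(g_1))|\le 0$. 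The main obstacle is the upper half of the two-sided diameter bound: under iterated branched covers, the component of the preimage of a small ball need not shrink at the rate of the reciprocal derivative, and obtaining the precise inequality requires running the telescoping pullback argument of Section~\ref{1} in the present setting, using essentially the compact containment $\omega_{g_0}(c(g_0))\Subset V_0$ to secure uniform moduli of annular neighborhoods along the critical orbit.
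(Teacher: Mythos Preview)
Your strategy is genuinely different from the paper's, and the lower-bound half of your sandwich is correct and coincides with one ingredient the paper also uses (Theorem~\ref{thm:main}). The difficulty is in your upper-bound half.

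The paper does not attempt any comparison of pullback geometry through the single conjugacy $h$. Instead it normalizes $c(g_0)=c(g_1)=0$, $h(1)=1$, and embeds $g_0,g_1$ in a \emph{holomorphic family} $g_\nu$, $\nu\in D_r$ with $r>1$, obtained by solving the Beltrami equation with coefficient $\nu\mu$ (where $\mu$ is the complex dilatation of $h$). Then
\[
u_k(\nu)=\frac{1}{n_k}\log\bigl|Dg_\nu^{\,n_k}(0)\bigr|
\]
is a sequence of harmonic functions in $\nu$, uniformly bounded above on compacta (because $\omega_{g_0}(0)\Subset V_0$ forces the orbits to stay in a compact set, giving a uniform bound on $|Dg_\nu|$). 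Theorem~\ref{thm:main} makes every subsequential limit nonnegative, and $u_k(0)\to 0$ by hypothesis; the Minimum Principle then forces $u_k(\nu)\to 0$ for every $\nu$, in particular $\nu=1$. No pullback diameter estimates, no H\"older comparison.

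In your route, the crucial step is the two-sided estimate
\[
\frac{c(\lambda)\,\lambda^{-n}}{|Dg_j^n(c(g_j))|}\ \le\ \diam U_n^{(j)}\ \le\ \frac{C(\lambda)\,\lambda^{n}}{|Dg_j^n(c(g_j))|}.
\]
The upper inequality is the problem, and it does \emph{not} follow from Section~\ref{1}: those arguments produce lower bounds on $|Df^n|$ via Koebe applied to the univalent piece of the pullback; they say nothing about the diameter of the full branched component $U_n^{(j)}$, which can be much larger than $|Dg_j^n|^{-1}$ when the pullback picks up many critical points (the degree of $g_j^n|_{U_n^{(j)}}$ is not bounded for fixed $\rho$). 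You have also slightly mislocated the difficulty on the lower side: ``Koebe applied to the univalent inverse branch'' only gives a ball of radius $\tau_0/(4|Dg_j^n|)$, and it is precisely bounding the univalent radius $\tau_0$ from below by $c(\lambda)\lambda^{-n}$ that requires the full telescoping of Section~\ref{1}, not just Koebe.

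Your scheme can be repaired by replacing $\diam U_n^{(j)}$ with the inradius $r_n^{(j)}:=\dist\!\bigl(c(g_j),\partial U_n^{(j)}\bigr)$. Then the upper bound $r_n^{(j)}\le \rho/|Dg_j^n(c(g_j))|$ is an immediate consequence of the Schwarz--Pick lemma for the proper map $g_j^n:U_n^{(j)}\to B(c_n^{(j)},\rho)$, with no $\lambda^n$ loss; the lower bound $r_n^{(j)}\ge c(\lambda)\lambda^{-n}/|Dg_j^n(c(g_j))|$ follows once one extracts from the proofs of Lemmas~\ref{lem:return}--\ref{lem:pass} the estimate $\tau_0\ge c(\lambda)\lambda^{-n}\rho$ for the univalent pullback radius along the critical orbit (this needs a little extra bookkeeping beyond what is written there). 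Mori's theorem transfers inradii just as well as diameters, since $h(\partial B)=\partial h(B)$. With these changes your argument goes through; as written, the diameter upper bound is an unjustified step.
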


\begin{proof}
Normalize the maps in such a way that $c(g_0)=c(g_1)=0$ and $h(1)=1$.
As in~\cite{mss}, one can include $g_0$ and $g_1$ in a family $g_\nu$ of quasi-conformally conjugated maps of the class
$\sU$, with $c(g_\nu)=0$,
which depends holomorphically on
$\nu\in D_r=\{|\nu|<r\}$, for some $r>1$. Namely, if $\mu=\frac{\partial h}{\partial \bar z}/\frac{\partial h}{\partial z}$ is complex dilatation
of $h$, then, for every $\nu\in D_r$, where $r=||\mu||_{\infty}^{-1}$, let $h_\nu$ be the unique q-c homeomorphism of $\C$ with complex dilatation
$\nu \mu$, which leaves the points $0,1$ fixed (in particular, $h_0=id$ and $h_1=h$). Then we can define domains $V_\nu=h_\nu(V_0)$,
$V_\nu'=h_\nu(V_0')$, and the map
$g_\nu=h_\nu\circ g_0\circ h_\nu^{-1}: V_\nu\to V_\nu'\in \sU$ with $c(g_\nu)=0$.
As $\omega_{g_\nu}(0)=h_\nu(\omega_{g_0}(c(g_0))$ is compactly contained in $V_\nu=h_\nu(V_0)$,
by the Schwarz lemma and a compactness argument,
given a compact subset $K$ of the disk $D_r$, there exists $C$, such that
$|Dg_\nu(g_\nu^i(0))|\le C$ for every $\nu\in K$ and every $i\ge0$.
Then $u_k(\nu)=n_k^{-1}\log |Dg_\nu^{n_k}(0)|$ is a sequence
of harmonic functions in $D_r$, which is bounded on compacts. On the other hand, by Theorem~\ref{thm:main},
every limit value of the sequence $\{u_k\}$ is non-negative, and, by the assumption,
$u_k(0)\to 0$.
According to the Minimum Principle, $u_k(\nu)\to 0$ for any $\nu$.
\end{proof}

\begin{remark}\label{qcinv}
In particular, the Collet-Eckmann condition
$\chi_-(g, c(g))>0$ is a quasi-conformal invariant.
In fact, it is even a topological invariant~\cite{PR0}.
\end{remark}

%\begin{remark}\label{backward}
%The method of the paper leads also to the following result.
%
%\end{remark}

The following is a consequence of Theorem~\ref{thm:up}:
%\marginpar{New corollary}
\begin{coro}\label{renorm}
Let $g$ be a polynomial which is infinitely-renormalizable around a critical
value $c$.
If $\chi_{+}(g, c)>0$, then
%\marginpar{cr.value instead cr. point}
$J(g)$ is not locally-connected.
\end{coro}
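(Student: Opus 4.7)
The plan is a proof by contradiction, following the scheme of the analogous statement for Collet--Eckmann maps alluded to at the end of Remark~\ref{notdens}. Suppose $J(g)$ is locally connected and $\chi_+(g,c)>0$. Choose $\lambda>1$ with $2\log\lambda<\chi_+(g,c)$ and apply Theorem~\ref{thm:up} (equivalently Theorem~\ref{thm:positiv}) to $z=c\in J(g)$: this produces $\rho>0$, $N\in\Z^+$, $\alpha>0$ and a set $H\subset\Z^+$ of upper density at least $\alpha$ such that, for every sufficiently large $n\in H$, the connected component $V_n\ni c$ of $g^{-n}(B(g^n(c),\rho))$ is contained in $B(c,\lambda^{-n}\rho)$ and $g^n\colon V_n\to B(g^n(c),\rho)$ carries at most $N$ critical points.

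Infinite renormalizability around $c$ provides polynomial-like renormalizations $g^{p_m}\colon U_m\to W_m$ centred at the critical point $c_0$ (with $g(c_0)=c$), periods $p_m\to\infty$, and nested small filled Julia sets $K_m\ni c_0$. Each $g^{p_m}$ is proper of a fixed local degree $d\ge 2$ at $c_0$, so the $k$-th renormalized iterate $(g^{p_m})^k$ carries $1+d+d^2+\cdots+d^{k-1}\ge d^{k-1}$ critical points (counted with multiplicity) inside $U_m$, all of which accumulate on $c_0$ as $k\to\infty$ by the renormalization distortion theory. Local connectivity of $J(g)$ at $c_0$ supplies a priori bounds $\operatorname{mod}(W_m\setminus U_m)\ge\delta>0$ and, in particular, forces $\operatorname{diam}(U_m)\to 0$ only at a subexponential rate in $p_m$.

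The contradiction will arise from matching scales. For large $n\in H$, the exponential shrinking $V_n\subset B(c,\lambda^{-n}\rho)$ combined with the a priori bounds guarantees that $g(V_n)\subset U_{m}$ for an appropriate $m=m(n)\to\infty$, and moreover that the preimage tree of $c$ under $(g^{p_{m}})^k$ inside $U_m$ accumulates on $c_0$ densely enough that, once $k\asymp n/p_m$ is large, at least $d^{k-1}$ of these critical points are actually captured inside $V_n$. Combined with the bound $N$ from Theorem~\ref{thm:up}, this forces $k=O(\log N)$, hence $n\le K(N,d)\, p_m$. Comparing this linear upper bound with the subexponential rate of shrinking of $U_m$ guaranteed by local connectivity, contrasted with the exponential rate $\lambda^{-n}$ at which $V_n$ must shrink, produces a contradiction for $n\in H$ sufficiently large.

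The main obstacle is the scale-matching in the third paragraph: one has to show rigorously that the preimages of $c$ coming from the renormalization tree indeed lie inside $V_n$, rather than escaping from it. This is exactly where the a priori bounds $\operatorname{mod}(W_m\setminus U_m)\ge\delta$ must be used; they provide the Koebe-type distortion control ensuring that the pull-back structure of $V_n$ under $g^n$ inherits the polynomial-like geometry of the renormalization, so that the critical count really grows like $d^k$. Coordinating this with the exponential shrinking delivered by Theorem~\ref{thm:up} is the heart of the argument.
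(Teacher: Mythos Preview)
Your approach differs substantially from the paper's, and as written it has genuine gaps.

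The paper's argument is brief and avoids fine renormalization theory entirely. Only the shrinking conclusion of Theorem~\ref{thm:up} is used: each small Julia set $J_k\ni c$ is a nondegenerate continuum, so for all large $n\in H$ it cannot lie in $V_n\subset B(c,\lambda^{-n}\rho)$; since $J_k$ is connected and contains $c$, this forces $g^n(J_k)\not\subset B(g^n(c),\rho)$, hence $\diam g^n(J_k)\ge\rho$. Choosing $m_k\in H$ with $m_k\to\infty$ yields $\inf_k\diam g^{m_k}(J_k)\ge\rho$. A Hausdorff limit of the compacta $g^{m_k}(J_k)$ is then a non-trivial wandering subcontinuum of $J(g)$, and the paper concludes by invoking Theorem~3.2 of \cite{bl} (see also \cite{lback}): a polynomial Julia set admitting such a continuum is not locally connected. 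The $N$-criticality bound plays no role.

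Your proposal, by contrast, rests on two unsubstantiated steps. First, the assertion that local connectivity of $J(g)$ yields a priori bounds $\operatorname{mod}(W_m\setminus U_m)\ge\delta$ is not a standard theorem; the well-known implication runs the other way (complex bounds imply local connectivity), and you give no reference or argument for the direction you need. Second, the ``scale-matching'' paragraph is a wish rather than an argument: you need $V_n$ (a neighbourhood of the critical \emph{value} $c$, not of $c_0$) to capture of order $d^{\,n/p_m}$ critical points of $g^n$, but you provide no mechanism for this and explicitly flag it as ``the main obstacle.'' Moreover, complex bounds give \emph{upper} estimates on $\diam U_m$, not the lower bound (``only at a subexponential rate'') you invoke; and even granting everything else, the final comparison does not close into a contradiction: from $g(V_n)\subset U_m$ and $n\le K p_m$ one cannot deduce anything incompatible with $\diam V_n\le\lambda^{-n}\rho$. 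As it stands the outline does not constitute a proof.
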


Indeed,
let $J_n$, $n\ge 1$, be a sequence of ``small'' Julia sets such that $c\in J_n$ and let $p_n\to \infty$
be their corresponding periods. Theorem~\ref{thm:up} implies that there is a sequence of integers
$m_n\to \infty$ such that $\inf_{n} \diam g^{m_n}(J_n)>0$. Hence, a Hausdorff limit point of the sequence
of compacts $g^{m_n}(J_n)$, $n\ge 1$, must be a  non-trivial wandering subcontinuum of $J(g)$.
By Theorem 3.2 of~\cite{bl} (see also~\cite{lback}),
$J(g)$ cannot be locally-connected.

\end{document}